\theoremstyle{plain}
\newtheorem{theorem}{Theorem}[section]
\newtheorem{lemma}[theorem]{Lemma}
\newtheorem{corollary}[theorem]{Corollary}
\newtheorem{definition}[theorem]{Definition}
\newtheorem{example}[theorem]{\textit{Example}} %\textit for ``Example'' is required
\numberwithin{equation}{section}
\newcommand{\oc}{\xrightarrow[]{o}}
\newcommand{\dopc}{\downarrow^{st_{op}}}
\newcommand{\doc}{\downarrow^{st_o}}
\newcommand{\socp}{\xrightarrow{st_{op}}}
\newcommand{\soc}{\xrightarrow{st_o}}
\begin{document}
\title[Statistical order convergence of operators]
{\small Statistical order convergence of operators on Riesz Spaces}
\author[A. Ayd\i n, E. Bayram, İ. Ayd\i n]{Abdullah Ayd\i n$^{1,*}$, Erdal Bayram$^2$, İshak Ayd\i n$^3$}

\newcommand{\acr}{\newline\indent}
\address{\llap{1\,} Department of Mathematics\acr
Mu\c{s} Alparslan University\acr
Mu\c{s}, Türkiye}
\email{a.aydin@alparslan.edu.tr}

\address{\llap{2\,} Department of Mathematics\acr
Tekirdağ Namık Kemal University\acr
Tekirdağ, Türkiye}
\email{ebayram@nku.edu.tr }

\address{\llap{3\,} Department of Mathematics\acr
Mu\c{s} Alparslan University\acr
Mu\c{s}, Türkiye}
\email{ishakaydin002@gmail.com}

\subjclass[2020]{46A40, 47B60, 40A35, 40A05}
\keywords{Statistical order convergence, order bounded operator, operator sequence, Riesz space, order convergence. \acr $^*$Corresponding Author}
%%%%%%%%%%%%%%%%%%%%%%%%%%%%%%%%%%%%%%%%%%%%%%%%%%%%%%%%%%%%%%%%%%%%%%%%%%%%%
\begin{abstract}
{This paper introduces statistical order convergence and its pointwise variant for sequences of order bounded operators between Riesz spaces. We establish fundamental properties: uniqueness of the limit, stability under lattice operations, and a characterization via natural density linking it to classical order convergence. Explicit examples show that statistical order convergence is strictly weaker than order convergence, confirming that this concept provides a proper extension of operator-theoretic convergence notions. The results preserve essential lattice structures and open avenues for further research in unbounded convergence and Banach lattice theory.}
\end{abstract}
\maketitle
%%%%%%%%%%%%%%%%%%%%%%%%%%%%%%%%%%%%%%%%%%%%%%%%%%%%%%%%%%%%%%%%%%%%%%%%5
\section{Introduction and Preliminaries}\label{Sec:1}
The concept of statistical convergence was introduced independently by Steinhaus \cite{St} and Fast \cite{Fast} in 1951, and has since been widely investigated in summability theory (see, e.g., \cite{Et,Con,Et1,Fridy,Trip}). In the context of function spaces, early studies were conducted by Moore \cite{Moor}, who utilized a scale function, and Chittenden \cite{Chit}, who investigated statistical convergence of continuous functions with respect to relatively uniform convergence. For further details on statistical convergence of functions, we refer the reader to \cite{BDK,DO,EDS,KDD,Mor}. Parallel to these developments, the theory of Riesz spaces (or vector lattices), originating from the foundational study of Riesz \cite{Riez}, has garnered significant attention due to its broad applications in analysis, economics, operator theory, and measure theory (see, e.g., \cite{AB1,AB2,AEG,BW,LZ,Vu,Za}). Although order convergence and related convergence notions in Riesz spaces are generally non-topological \cite{Gor}, a rich operator theory can be established within this framework without relying on topological structures. Consequently, the study of statistical convergence within Riesz spaces has attracted considerable attention in recent years, combining ideas from summability theory and ordered vector spaces.

Statistical convergence for sequences in Riesz spaces was first introduced by Şençimen and Pehlivan \cite{SP} using order convergence and subsequently extended by Aydın \cite{Aydn1,Aydn2,AE}. However, while statistical convergence of sequences has been extensively studied, the corresponding theory for operators between Riesz spaces remains largely unexplored. This paper aims to fill this gap by introducing and systematically investigating statistical order convergence for sequences of order bounded operators.

We now recall some essential definitions and results regarding Riesz spaces. A Riesz space is an ordered vector space with the lattice property; that is, for any two elements $u$ and $v$, the infimum $u \wedge v$ and the supremum $u \vee v$ exist. A Riesz space is called Dedekind complete if every non-empty subset that is bounded from above has a supremum. The Archimedean property asserts that $\frac{1}{n}u \downarrow \theta$ for all positive elements $u$, where $\theta$ denotes the zero element. Throughout this paper, unless otherwise specified, all Riesz spaces are assumed to be real and Archimedean. A sequence $(u_n)$ is called increasing (denoted by $u_n \uparrow$) if $u_1 \leq u_2 \leq \cdots$, and decreasing (denoted by $u_n \downarrow$) if $u_1 \geq u_2 \geq \cdots$. The notation $u_n \downarrow u$ signifies that $u_n \downarrow$ and $\inf u_n = u$, while $u_n \uparrow u$ indicates that $u_n \uparrow$ and $\sup u_n = u$.

\begin{definition} \cite[Thm. 16.2]{LZ}
Let $\mathcal{L}$ be a Riesz space and $(u_n)$ be a sequence in $\mathcal{L}$. The sequence $(u_n)$ is said to be \textit{order convergent} to $u$, denoted by $u_n \xrightarrow{o} u$, if there exists a sequence $v_n \downarrow \theta$ such that $|u_n - u| \leq v_n$ for all $n \in \mathbb{N}$.
\end{definition}

In a Riesz space $\mathcal{L}$, the set $\{u \in \mathcal{L} : -a \leq u \leq a\}$, where $a \in \mathcal{L}_+$, is called an \emph{order interval} and is denoted by $[-a, a]$. A subset $A \subseteq \mathcal{L}$ is said to be \emph{order bounded} if there exists $a \in \mathcal{L}_+$ such that $A \subseteq [-a, a]$. Throughout this paper, the term \emph{operator} will denote a linear map between two Riesz spaces.

\begin{definition}
Let $S$ be an operator between Riesz spaces $\mathcal{L}$ and $\mathcal{M}$.
\begin{enumerate}[(a)]
\item If $S$ maps order bounded subsets of $\mathcal{L}$ into order bounded subsets of $\mathcal{M}$, then $S$ is called an \textit{order bounded operator}.
\item If $S(u_n) \oc S(u)$ in $\mathcal{M}$ for any sequence $(u_n)$ in $\mathcal{L}$ with $u_n \oc u$, then $S$ is called a \textit{$\sigma$-order continuous operator}.
\end{enumerate}
\end{definition}

In the theory of Riesz spaces, the prefix $\sigma$ is typically associated with sequences and countable quantities. Throughout this paper, $\mathcal{L}$ and $\mathcal{M}$ denote Riesz spaces. The set of all order bounded operators from $\mathcal{L}$ to $\mathcal{M}$ is denoted by $\mathcal{L}_b(\mathcal{L}, \mathcal{M})$. This space becomes an ordered vector space when equipped with the partial order defined by $S \leq T$ if and only if $S(u) \leq T(u)$ for all $u \in \mathcal{L}_+$. Moreover, if $\mathcal{M}$ is Dedekind complete, then $\mathcal{L}_b(\mathcal{L}, \mathcal{M})$ is itself a Dedekind complete Riesz space (see, e.g., \cite[Thm. 1.67]{AB1}). 

Next, we recall some fundamental notions of statistical convergence. The \emph{natural density} of a subset $J$ of positive integers is defined by$$\delta(J):=\lim_{n \to \infty} \frac{1}{n} |\{j\in J:j\leq n\}|,$$provided the limit exists (see \cite{Fridy,Trip}). Accordingly, a real sequence $(r_n)$ is said to be \textit{statistically convergent} to $r$ if, for every $\varepsilon > 0$,
$$
\lim_{n\to \infty} \frac{1}{n} |\{j\leq n:|r_j- r| \geq \varepsilon\}| = 0.
$$
To motivate our main results regarding operators, we close this section by recalling the classical concepts of statistical convergence in the space of continuous functions. Let $C(X)$ denote the space of all real-valued continuous functions on a compact subset $X$ of real numbers, equipped with the supremum norm $\|f\|_\infty = \sup_{x \in X} |f(x)|$. We adopt the following definitions from \cite{BDK,Chit,DO,EDS,KDD}.
\begin{definition}
Let $(f_n)$ be a sequence in $C(X)$. For any $x \in X$ and $\varepsilon > 0$, let us define the sets
$$
\Psi_n(x, \varepsilon) := |\{k \leq n : |f_k(x) - f(x)| \geq \varepsilon\}|
$$
and
$$
\Phi_n(\varepsilon) := |\{k \leq n : \|f_k - f\|_\infty \geq \varepsilon\}|.
$$
\begin{enumerate}[(i)]
\item If $\lim_{n \to \infty} \frac{1}{n}\Psi_n(x, \varepsilon) = 0$ for every $x \in X$ and $\varepsilon > 0$, then $(f_n)$ is said to be {\em statistically pointwise convergent} to $f$ on $X$.
\item If the convergence $\lim_{n \to \infty} \frac{1}{n}\Psi_n(x, \varepsilon) = 0$ is uniform with respect to $x \in X$ for each $\varepsilon > 0$, then $(f_n)$ is said to be {\em equi-statistically convergent} to $f$.
\item If $\lim_{n \to \infty} \frac{1}{n}\Phi_n(\varepsilon) = 0$ for every $\varepsilon > 0$, then $(f_n)$ is said to be {\em statistically uniformly convergent} to $f$.\end{enumerate}
\end{definition}
%%%%%%%%%%%%%%%%%%%%%%%%%%%%%%%%%%%%%%%%%%%%%%%%%%%%%%%%%%%%%%%%%%%%%%%%%%%%%%%%%%%%%%%%%%%

%%%%%%%%%%%%%%%%%%%%%%%%%%%%%%%%%%%%%%%%%%%%%%%%%%%%%%%%%%%%%%%%%%%%%%%%%%%%%%%%%%%%%%%%%%%
\section{Statistical monotonicity}\label{Sec:2}
The notion of statistical monotonicity was first introduced by Tripathy \cite{Trip} for real-valued sequences and was subsequently extended to sequences in Riesz spaces by Şençimen and Pehlivan \cite{SP}. Statistical monotonicity plays a fundamental role in the theory of statistical convergence. In the operator setting, however, monotonicity may be interpreted in two distinct ways, namely, pointwise monotonicity and monotonicity with respect to the operator order. The purpose of this section is to clarify the relationship between these two notions and to investigate their basic properties.

\begin{definition}
Let $(S_n)$ be a sequence in $\mathcal{L}_b(\mathcal{L},\mathcal{M})$. The sequence $(S_n)$ is said to be:
\begin{enumerate}[(i)]
\item \textit{statistically order pointwise decreasing} to an operator $S \in \mathcal{L}_b(\mathcal{L},\mathcal{M})$, denoted by $S_n \dopc S$, if, for each $u \in \mathcal{L}_+$, there exists a subset $J_u \subseteq \mathbb{N}$ with $\delta(J_u) = 1$ such that $(S_n(u))$ decreases to $S(u)$ along indices in $J_u$.
\item \textit{statistically order decreasing} to $S \in \mathcal{L}_b(\mathcal{L},\mathcal{M})$, denoted by $S_n \doc S$, if there exists a subset $J \subseteq \mathbb{N}$ with $\delta(J) = 1$ such that $(S_n)$ decreases to $S$ in $\mathcal{L}_b(\mathcal{L},\mathcal{M})$ along indices in $J$.
\end{enumerate}
\end{definition}

\begin{theorem}\label{implication}
Every statistically order decreasing sequence is statistically order pointwise decreasing to the same limit.
\end{theorem}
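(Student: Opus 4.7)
The natural plan is to reuse the single density-one index set provided by the hypothesis. Extract $J \subseteq \mathbb{N}$ with $\delta(J)=1$ such that $(S_n)_{n\in J}$ is decreasing in $\mathcal{L}_b(\mathcal{L},\mathcal{M})$ with infimum $S$. For each $u\in\mathcal{L}_+$ I would simply set $J_u:=J$; then $\delta(J_u)=1$ is automatic, and the entire task reduces to verifying that $(S_n(u))_{n\in J}$ decreases to $S(u)$ in $\mathcal{M}$.

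Monotonicity along $J$ is the easy half: for $m,n\in J$ with $m<n$, the operator order gives $S_m\geq S_n$, and positivity of $u$ yields $S_m(u)\geq S_n(u)$ in $\mathcal{M}$. Similarly, $S\leq S_n$ in $\mathcal{L}_b(\mathcal{L},\mathcal{M})$ together with $u\geq\theta$ gives $S(u)\leq S_n(u)$, so $(S_n(u))_{n\in J}$ is decreasing and bounded below by $S(u)$.

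The heart of the proof is the upgrade $\inf_{n\in J}S_n(u)=S(u)$ in $\mathcal{M}$. Working in the Dedekind complete setting in which $\mathcal{L}_b(\mathcal{L},\mathcal{M})$ is itself a Riesz space, the plan is to convert the operator-level infimum into the pointwise one. Set $T_n:=S_n-S\geq\theta$, so $T_n\downarrow\theta$ along $J$ in $\mathcal{L}_b(\mathcal{L},\mathcal{M})$. Define $R:\mathcal{L}_+\to\mathcal{M}_+$ by $R(v):=\inf_{n\in J}T_n(v)$; standard additivity of infima of decreasing sequences in a Riesz space shows that $R$ is additive and positively homogeneous on $\mathcal{L}_+$, hence extends to a positive operator $R\in\mathcal{L}_b(\mathcal{L},\mathcal{M})$. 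Because $\theta\leq R\leq T_n$ for every $n\in J$ and $\inf_{n\in J}T_n=\theta$ in $\mathcal{L}_b(\mathcal{L},\mathcal{M})$, we conclude $R=\theta$, so in particular $R(u)=\theta$, i.e.\ $\inf_{n\in J}S_n(u)=S(u)$.

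The main obstacle is precisely this passage from the operator infimum to the pointwise infimum: a priori, $\inf_{n\in J}S_n(u)$ in $\mathcal{M}$ could strictly exceed $S(u)$, since the operator order on $\mathcal{L}_b(\mathcal{L},\mathcal{M})$ is coarser than evaluation at a single $u$. The proposed way around this is the construction of the comparison operator $R$, which relies on Dedekind completeness of $\mathcal{M}$ and on the fact that the infimum of the sum of two decreasing sequences is the sum of the infima, ensuring $R$ is linear. Everything else is bookkeeping on the density-one set $J$.
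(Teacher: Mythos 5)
Your proof is correct and follows essentially the same route as the paper: both arguments reuse the single density-one set $J$ for every $u\in\mathcal{L}_+$, obtain monotonicity of $(S_n(u))_{n\in J}$ from positivity of $u$, and then reduce the whole statement to the fact that the infimum of a decreasing sequence of order bounded operators is attained pointwise. The only divergence is that the paper simply cites this last fact (Vulikh, Thm.\ VIII.2.3), whereas you reprove it via the Riesz--Kantorovich-style comparison operator $R(v):=\inf_{n\in J}T_n(v)$, using additivity of infima of decreasing sequences to get linearity and then squeezing $\theta\le R\le T_n$ to force $R=\theta$; that argument is sound. One caveat worth flagging: your construction needs $\mathcal{M}$ to be Dedekind ($\sigma$-)complete so that the pointwise infima defining $R$ exist, a hypothesis that is not stated in this theorem (it is only a blanket assumption later, in Section 3); the paper's citation carries the same implicit requirement, so this is a shared reliance rather than a gap specific to your proof.
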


\begin{proof}
Suppose $S_n \doc S$. Then there exists a subset $J \subseteq \mathbb{N}$ with $\delta(J) = 1$ such that $(S_n)$ decreases to $S$ in the operator order along $J$. Let $u \in \mathcal{L}_+$ be fixed. Since $S_j \downarrow S$ on $J$, the sequence $(S_j(u))_{j \in J}$ is decreasing in $\mathcal{M}$. Furthermore, recalling that the infimum of a decreasing sequence of operators is attained pointwise (see, e.g., \cite[Thm. VIII.2.3]{Vu}), we have
$$
\inf_{j \in J} S_j(u) = (\inf_{j \in J} S_j)(u) = S(u).
$$
Thus, for every $u \in \mathcal{L}_+$, the sequence $(S_n(u))$ decreases to $S(u)$ along the set $J$ having density one. We conclude that $S_n \dopc S$.
\end{proof}

The converse of Theorem \ref{implication} fails in general. This stems from the fact that the intersection of an arbitrary family of sets with natural density one does not necessarily have density one.

\begin{theorem}\label{thm:operator-uniqueness}
Let $(S_n)$ be a sequence in $\mathcal{L}_b(\mathcal{L}, \mathcal{M})$. Then the following hold:
\begin{enumerate}[(i)]
\item If $S_n \dopc S$ and $S_n \dopc P$, then $S = P$.
\item If $S_n \doc S$ and $S_n \doc P$, then $S = P$.
\end{enumerate}
\end{theorem}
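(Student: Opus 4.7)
The plan is to prove (i) directly from the definition via a density-intersection argument, and then reduce (ii) to (i) using Theorem \ref{implication} (though it can also be proved directly with the same template).

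For (i), I would fix $u \in \mathcal{L}_+$ and apply the definition twice: there exist sets $J_u^S, J_u^P \subseteq \mathbb{N}$, both of natural density one, along which $S_n(u) \downarrow S(u)$ and $S_n(u) \downarrow P(u)$, respectively. The key observation is that $\delta(J_u^S \cap J_u^P) = 1$, since $\delta(\mathbb{N}\setminus J_u^S) = \delta(\mathbb{N}\setminus J_u^P) = 0$ and a finite union of density-zero sets has density zero. In particular $K := J_u^S \cap J_u^P$ is cofinal in $\mathbb{N}$. The next step is to check that passing from a decreasing sequence to a cofinal subsequence preserves the infimum: if $(a_n)_{n \in J_u^S}$ is decreasing with infimum $S(u)$ and $K \subseteq J_u^S$ is cofinal, then $\inf_{n \in K} a_n = S(u)$, because every $a_m$ with $m \in J_u^S$ dominates some $a_k$ with $k \in K$. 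Applying this to both data simultaneously yields $S(u) = \inf_{n \in K} S_n(u) = P(u)$. Since this holds for every $u \in \mathcal{L}_+$ and both operators are linear, the decomposition $\mathcal{L} = \mathcal{L}_+ - \mathcal{L}_+$ gives $S = P$ on all of $\mathcal{L}$.

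For (ii), the cleanest route is to invoke Theorem \ref{implication}: $S_n \doc S$ and $S_n \doc P$ imply $S_n \dopc S$ and $S_n \dopc P$, so (i) applies. Alternatively, one can mirror the above argument at the operator level: take density-one sets $J_1, J_2$ with $\inf_{n \in J_1} S_n = S$ and $\inf_{n \in J_2} S_n = P$ in the operator order, intersect to obtain a density-one (hence cofinal) set, and use the cofinal-infimum lemma in the ordered vector space $\mathcal{L}_b(\mathcal{L},\mathcal{M})$.

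The only mildly delicate point is the cofinal-subsequence lemma for decreasing sequences; the rest is bookkeeping with natural density and linearity. No real obstacle is expected, since the machinery (density, infima of monotone sequences, pointwise computation of $\inf S_n$ already used in the proof of Theorem \ref{implication}) is all in hand.
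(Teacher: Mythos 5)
Your proposal is correct and follows essentially the same route as the paper: intersect the two density-one sets, observe the intersection still has density one, and identify the infimum of the decreasing sequence along the intersection with both $S(u)$ and $P(u)$, then reduce (ii) to (i) via Theorem \ref{implication}. Your explicit justification of the cofinal-subsequence step (that passing from a decreasing sequence to a cofinal subsequence preserves the infimum) is a welcome refinement of a point the paper compresses into ``uniqueness of order limits,'' but it is not a different argument.
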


\begin{proof}
$(i)$ Assume $S_n \dopc S$ and $S_n \dopc P$. Let $u \in \mathcal{L}_+$ be arbitrary. By definition, there exist subsets $J_u, K_u \subseteq \mathbb{N}$ with $\delta(J_u) = \delta(K_u) = 1$ such that
$$
S_j(u)\downarrow S(u) \quad \text{and} \quad S_k(u)\downarrow P(u).
$$
on $J_u$ and $K_u$, respectively. Define $M_u := J_u \cap K_u$. Since the intersection of two sets of density one has density one, we have $\delta(M_u) = 1$. The sequence $(S_n(u))$ restricted to indices in $M_u$ is a decreasing subsequence of both $(S_j(u))_{j \in J_u}$ and $(S_k(u))_{k \in K_u}$. By the uniqueness of order limits in Riesz spaces, we obtain
$$
S(u) = \inf_{m \in M_u} S_m(u) = P(u).
$$
Since $S(u) = P(u)$ holds for every $u \in \mathcal{L}_+$, and operators are determined by their values on the positive cone, we conclude that $S = P$.

$(ii)$ Assume $S_n \doc S$ and $S_n \doc P$. By Theorem \ref{implication}, we have $S_n \dopc S$ and $S_n \dopc P$. Then part $(i)$ implies $S = P$.
\end{proof}

\begin{theorem}\label{thm:decomposition}
A sequence $(S_n)$ in $\mathcal{L}_b(\mathcal{L}, \mathcal{M})$ is statistically order decreasing to an operator $S$ if and only if there exists a decreasing sequence $(P_n)$ in $\mathcal{L}_b(\mathcal{L}, \mathcal{M})$ such that $P_n \downarrow S$ and $S_n = P_n$ for almost all $n$.
\end{theorem}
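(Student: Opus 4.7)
The proof splits naturally into two directions. For the sufficiency direction, suppose there exists a decreasing sequence $(P_n)$ with $P_n \downarrow S$ and $S_n = P_n$ on a set $K$ with $\delta(K) = 1$. Taking $J := K$, the restriction $(S_n)_{n \in J}$ is decreasing because $(P_n)$ is, and its infimum along $J$ equals $\inf_{n \in \mathbb{N}} P_n = S$ by cofinality of $K$ in $\mathbb{N}$. Hence $S_n \doc S$.

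For the necessity direction, assume $S_n \doc S$ is witnessed by a density-one set $J \subseteq \mathbb{N}$ along which $(S_n)_{n \in J}$ is decreasing with infimum $S$ in the operator order. Since $\delta(J) = 1$, the set $J$ is cofinal in $\mathbb{N}$, so for every $n \in \mathbb{N}$ the integer $j(n) := \min\{j \in J : j \geq n\}$ is well defined. Set $P_n := S_{j(n)}$. Then $P_n \in \mathcal{L}_b(\mathcal{L}, \mathcal{M})$ automatically, and for $n \in J$ we have $j(n) = n$, so $S_n = P_n$ holds on the whole of $J$, a set of density one. Monotonicity of $(P_n)$ on all of $\mathbb{N}$ follows because $n \leq m$ forces $j(n) \leq j(m)$ in $J$, and hence $P_n = S_{j(n)} \geq S_{j(m)} = P_m$ by the decreasing property along $J$. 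Finally, as $n \to \infty$ the indices $j(n)$ traverse a cofinal subset of $J$, yielding $\inf_n P_n = \inf_{j \in J} S_j = S$.

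The main obstacle is manufacturing a genuinely monotone sequence $(P_n)$ indexed by all of $\mathbb{N}$ out of data defined only on the density-one set $J$. The key device is the look-ahead $j(n)$: filling each missing index $n \notin J$ with the value at the nearest future member of $J$ preserves the decreasing property for free, while leaving the values along $J$ untouched, which is exactly what the ``almost all $n$'' clause demands. No appeal to Dedekind completeness of $\mathcal{M}$ is required, since the operator infimum of $(S_j)_{j \in J}$ is already furnished by the hypothesis $S_n \doc S$.
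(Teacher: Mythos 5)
Your proposal is correct. The sufficiency direction (from the existence of $(P_n)$ to $S_n \doc S$) is essentially identical to the paper's argument: restrict to the density-one set, inherit monotonicity from $(P_n)$, and identify the infimum by cofinality. The necessity direction is omitted in the paper as ``standard arguments,'' and your look-ahead construction $P_n := S_{j(n)}$ with $j(n) = \min\{j \in J : j \geq n\}$ is a correct and complete way to carry it out: it preserves the values on $J$, yields a globally decreasing sequence, and recovers $\inf_n P_n = \inf_{j \in J} S_j = S$ without invoking Dedekind completeness. So you have in fact supplied slightly more detail than the paper itself.
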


\begin{proof}
The necessity follows standard arguments and is omitted. For the sufficiency, assume there exists a sequence $(P_n)$ in $\mathcal{L}_b(\mathcal{L}, \mathcal{M})$ such that $P_n \downarrow S$ and a subset $J \subseteq \mathbb{N}$ with $\delta(J) = 1$ satisfying $S_j = P_j$ for all $j \in J$. Consider any indices $j, m \in J$ with $j < m$. Since $(P_n)$ is decreasing, we have
$$
P_m = S_m\le S_j = P_j.
$$
Thus, $(S_{n})$ is decreasing along $J$. Moreover, since $P_{n} \downarrow S$, we obtain
$$
\inf_{j \in J} S_j = \inf_{j \in J} P_j = S.
$$
Hence we get $S_n\doc S$ as required. 
\end{proof}

An analogous characterization holds for statistically order pointwise decreasing sequences. That is, $S_n \dopc S$ if and only if there is a sequence $(P_n)$ such that $P_n(u) \downarrow S(u)$ for all $u \in \mathcal{L}_+$ and $S_n = P_n$ for almost all $n$. The proof of the following lemma follows from straightforward order inequalities and is therefore omitted.
\begin{lemma}\label{lem:additivity}
The concepts of statistically order decreasing and statistically order pointwise decreasing convergence satisfy additivity and positive homogeneity. That is, if $S_n \doc S$, $P_n \doc P$, and $\alpha \ge 0$, then $S_n + P_n \doc S + P$ and $\alpha S_n \doc \alpha S$. Analogous results hold for statistically order pointwise decreasing sequences.
\end{lemma}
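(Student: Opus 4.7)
The plan is to unwind both statements directly from the density-one definitions, relying on two standard facts: (i) the intersection of finitely many subsets of $\NN$ of natural density one again has density one; and (ii) in any Archimedean Riesz space, if $a_n \downarrow a$ and $b_n \downarrow b$ then $a_n + b_n \downarrow a + b$ and $\alpha a_n \downarrow \alpha a$ for every $\alpha \ge 0$. Fact (ii), applied in the Riesz space $\mathcal{L}_b(\mathcal{L}, \mathcal{M})$ (or in $\mathcal{M}$, depending on which variant is being proved), is what transports decreasingness and infima through sums and positive scalar multiplications.

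For the additivity of $\doc$, I would pick density-one witness sets $J, K \subseteq \NN$ along which $(S_n)$ and $(P_n)$ decrease to $S$ and $P$ respectively in $\mathcal{L}_b(\mathcal{L}, \mathcal{M})$, and set $M := J \cap K$, which has $\delta(M) = 1$. For any $m, m' \in M$ with $m \le m'$, the inequalities $S_{m'} \le S_m$ and $P_{m'} \le P_m$ add to give $S_{m'} + P_{m'} \le S_m + P_m$, so $(S_n + P_n)_{n \in M}$ is decreasing. Applying fact (ii) along $M$ yields $\inf_{m \in M}(S_m + P_m) = S + P$, so $S_n + P_n \doc S + P$. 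For positive homogeneity, the witness set $J$ of $S_n \doc S$ works unchanged: multiplication by $\alpha \ge 0$ preserves the operator order, so $(\alpha S_n)_{n \in J}$ decreases, and fact (ii) gives $\inf_{j \in J} \alpha S_j = \alpha S$, hence $\alpha S_n \doc \alpha S$.

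For the $\dopc$ analogs, I would repeat the argument at each fixed $u \in \mathcal{L}_+$: choose density-one sets $J_u, K_u$ from $S_n \dopc S$ and $P_n \dopc P$, intersect them to obtain a density-one set $M_u$ along which $(S_n(u))$ and $(P_n(u))$ decrease to $S(u)$ and $P(u)$ in $\mathcal{M}$, and conclude by fact (ii) in $\mathcal{M}$ that $(S_n(u) + P_n(u)) = (S_n + P_n)(u)$ decreases to $(S+P)(u)$ along $M_u$. Positive homogeneity is identical. The only substantive step is fact (ii), and even this is not really an obstacle: given $c \le a_n + b_n$ for all $n$, the rearrangement $c - a_n \le b_n$ combined with monotonicity across $n$ forces $c \le a + b$, so $\inf (a_n + b_n) = a + b$. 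Everything else is genuinely just bookkeeping, which is presumably why the authors declared the proof omitted.
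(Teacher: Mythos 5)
Your proof is correct and is precisely the ``straightforward order inequalities'' argument that the paper alludes to but omits: intersect the density-one witness sets, observe that sums and positive scalar multiples of decreasing sequences are decreasing, and use the standard Riesz-space fact that $\inf(a_n+b_n)=\inf a_n+\inf b_n$ for decreasing sequences (your sketch of that fact via $c\le a_n+b_m$ for $n\ge m$ is the right one). The only step you pass over silently is that after restricting from $J$ to $M=J\cap K$ one still has $\inf_{m\in M}S_m=S$, which holds because $M$ is a cofinal subset of $J$ and $(S_j)_{j\in J}$ is decreasing; this is routine and consistent with how the paper itself handles such intersections elsewhere.
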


\begin{theorem}\label{thm:lattice-operations}
Assume that $\mathcal{M}$ is a Dedekind complete Riesz space and let $(S_{n})$ and $(P_{n})$ be sequences in $\mathcal{L}_{b}(\mathcal{L}, \mathcal{M})$. If $S_n \doc S$ and $P_n \doc P$, then the following properties hold:
\begin{enumerate}[(i)]
\item $S_{n} \vee P_n \doc S \vee P$.
\item $S_{n} \wedge P_n \doc S \wedge P$.
\item $S_{n}^{+} \doc S^{+}$.
\end{enumerate}
\end{theorem}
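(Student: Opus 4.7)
The plan is to reduce all three statements to Theorem \ref{thm:decomposition}, which characterizes statistically order decreasing sequences as genuine decreasing sequences modulo a set of density zero. Since $\mathcal{M}$ is Dedekind complete, $\mathcal{L}_b(\mathcal{L},\mathcal{M})$ is itself a Dedekind complete (Archimedean) Riesz space, so the lattice operations $\vee$ and $\wedge$ are available at the operator level and enjoy the usual Fr\'echet-type inequalities.

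First I would invoke Theorem \ref{thm:decomposition} twice to obtain decreasing sequences $(\widetilde{S}_n)$ and $(\widetilde{P}_n)$ in $\mathcal{L}_b(\mathcal{L},\mathcal{M})$ with $\widetilde{S}_n\downarrow S$ and $\widetilde{P}_n\downarrow P$, together with density-one index sets $J, K\subseteq\mathbb{N}$ such that $S_n=\widetilde{S}_n$ for $n\in J$ and $P_n=\widetilde{P}_n$ for $n\in K$. Setting $L:=J\cap K$ preserves density one, and on $L$ one has the pointwise identities $S_n\vee P_n=\widetilde{S}_n\vee\widetilde{P}_n$ and $S_n\wedge P_n=\widetilde{S}_n\wedge\widetilde{P}_n$.

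Next I would prove the two monotone-convergence facts that drive everything: namely, $\widetilde{S}_n\vee\widetilde{P}_n\downarrow S\vee P$ and $\widetilde{S}_n\wedge\widetilde{P}_n\downarrow S\wedge P$ in $\mathcal{L}_b(\mathcal{L},\mathcal{M})$. Monotonicity of the sequences is immediate from monotonicity of $\vee$ and $\wedge$. For the value of the infimum I would use the standard Riesz-space estimate $|a\vee b-a'\vee b'|\le |a-a'|+|b-b'|$ (and its $\wedge$ analogue), which under $\widetilde{S}_n\ge S$ and $\widetilde{P}_n\ge P$ collapses to
$$
0\le (\widetilde{S}_n\vee\widetilde{P}_n)-(S\vee P)\le (\widetilde{S}_n-S)+(\widetilde{P}_n-P)\downarrow\theta,
$$
so Archimedeanness of $\mathcal{L}_b(\mathcal{L},\mathcal{M})$ forces the squeezed sequence to decrease to $\theta$. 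Combining this with the identity on $L$ and applying Theorem \ref{thm:decomposition} in the reverse direction yields $S_n\vee P_n\doc S\vee P$ and, by the same argument, $S_n\wedge P_n\doc S\wedge P$. For (iii) I would simply apply (i) with the constant null sequence $P_n\equiv\theta\doc\theta$ (trivially decreasing), so that $S_n^{+}=S_n\vee\theta\doc S\vee\theta=S^{+}$.

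The only genuinely delicate step is verifying that the lattice inequality $|a\vee b-a'\vee b'|\le |a-a'|+|b-b'|$ transfers to the operator lattice; this is where Dedekind completeness of $\mathcal{M}$ enters, because without it the suprema $S\vee P$ and $\widetilde{S}_n\vee\widetilde{P}_n$ need not exist in $\mathcal{L}_b(\mathcal{L},\mathcal{M})$. Once this is in place the argument is mechanical, and the positive-part statement (iii) is essentially a corollary.
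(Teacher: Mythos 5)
Your proof is correct, but it takes a somewhat different route from the paper's. The paper argues directly on the index sets: it intersects the two density-one sets $J$ and $K$, observes that $(S_m\vee P_m)_{m\in J\cap K}$ is decreasing, and then cites the Luxemburg--Zaanen distributivity result $\inf_m(S_m\vee P_m)=(\inf_m S_m)\vee(\inf_m P_m)$ to identify the infimum, with (ii) analogous and (iii) obtained exactly as you do, from (i) with the zero sequence. You instead first pass through the decomposition characterization of statistically order decreasing sequences from Section~\ref{Sec:2} to replace $(S_n)$ and $(P_n)$ by genuinely decreasing sequences $\widetilde S_n\downarrow S$, $\widetilde P_n\downarrow P$, prove the monotone-convergence fact $\widetilde S_n\vee\widetilde P_n\downarrow S\vee P$ from scratch via the Birkhoff inequality $|a\vee b-a'\vee b'|\le|a-a'|+|b-b'|$ and a sandwich, and then transfer back. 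Your approach buys a self-contained proof of the key lattice fact that the paper merely cites, at the cost of leaning on the decomposition theorem (whose necessity direction the paper leaves to the reader). Two small remarks: the appeal to Archimedeanness in your squeeze is unnecessary --- from $\theta\le d_n\le e_n$ with $e_n\downarrow\theta$ one gets $\theta\le\inf_n d_n\le\inf_n e_n=\theta$ in any Riesz space; and your worry about the Birkhoff inequality ``transferring'' to the operator lattice is moot, since it holds in every Riesz space and Dedekind completeness of $\mathcal{M}$ is needed only to make $\mathcal{L}_b(\mathcal{L},\mathcal{M})$ a Riesz space in the first place, which both proofs require.
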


\begin{proof}
Since $\mathcal{M}$ is Dedekind complete, the space $\mathcal{L}_{b}(\mathcal{L}, \mathcal{M})$ is also a Dedekind complete Riesz space. This ensures that the infima of bounded monotonic sequences exist.

$(i)$ By hypothesis, there exist subsets $J, K \subseteq \mathbb{N}$ with natural density one such that $(S_n)$ decreases to $S$ along $J$, and $(P_n)$ decreases to $P$ along $K$. Let $M := J \cap K$. Since the intersection of two sets of density one has density one, we have $\delta(M)=1$. 
Since both $(S_m)_{m \in M}$ and $(P_m)_{m \in M}$ are decreasing sequences, it follows from standard Riesz space properties that their supremum $(S_m \vee P_m)_{m \in M}$ is also a decreasing sequence. Furthermore, in a Dedekind complete Riesz space, the infimum operation distributes over the supremum for decreasing sequences (see, e.g., \cite[Thm. 16.1(iv)]{LZ}). Therefore, we have
$$
\inf_{m \in M} (S_{m} \vee P_{m}) = (\inf_{m \in M} S_m) \vee (\inf_{m \in M} P_m) = S \vee P.
$$
This confirms that $S_{n} \vee P_n \doc S \vee P$.

$(ii)$ The proof for $S_{n} \wedge P_{n}$ is analogous. Since $(S_m \wedge P_m)$ is decreasing on $M$ and $\inf(S_m \wedge P_m) = (\inf S_m) \wedge (\inf P_m)$, the result follows.

$(iii)$ Recall that $S_{n}^{+} = S_{n} \vee \theta$. Since the constant zero sequence $(\theta)$ is trivially statistically order decreasing to $\theta$, the result follows immediately as a special case of part $(i)$.
\end{proof}

It is clear that the conclusions of Theorem \ref{thm:lattice-operations} also hold for statistically order pointwise decreasing sequences.

\begin{theorem}\label{thm:squeeze}
Let $(S_n)$, $(P_n)$, and $(Q_n)$ be sequences in $\mathcal{L}_b(\mathcal{L}, \mathcal{M})$, and let $T \in \mathcal{L}_b(\mathcal{L}, \mathcal{M})$. Assume that $(P_n)$ is decreasing and for every $n \in \mathbb{N}$,
$$
S_n \leq P_n \leq Q_n.
$$
If $S_n \dopc T$ and $Q_n \dopc T$, then $P_n \dopc T$.
\end{theorem}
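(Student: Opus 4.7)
The plan is to fix an arbitrary $u\in\mathcal{L}_+$ and construct a single density-one set $M_u$ along which $(P_n(u))$ decreases to $T(u)$; by definition of $\dopc$ this is exactly what needs to be verified.

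First, I would extract the witnessing sets. From $S_n\dopc T$ applied at $u$, choose $J_u\subseteq\mathbb{N}$ with $\delta(J_u)=1$ along which $(S_j(u))_{j\in J_u}$ decreases to $T(u)$; similarly, $Q_n\dopc T$ yields $K_u\subseteq\mathbb{N}$ with $\delta(K_u)=1$ along which $(Q_k(u))_{k\in K_u}$ decreases to $T(u)$. As in Theorem~\ref{thm:operator-uniqueness}, the intersection $M_u:=J_u\cap K_u$ has density one. A routine check shows that a subsequence of a decreasing sequence in a Riesz space inherits the same infimum, so both $(S_m(u))_{m\in M_u}$ and $(Q_m(u))_{m\in M_u}$ decrease to $T(u)$.

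Next I would verify that $(P_m(u))_{m\in M_u}$ is itself a decreasing sequence with infimum $T(u)$. Monotonicity is immediate: the hypothesis that $(P_n)$ is decreasing in the operator order of $\mathcal{L}_b(\mathcal{L},\mathcal{M})$ translates, evaluated at $u\in\mathcal{L}_+$, to $(P_n(u))$ being decreasing in $\mathcal{M}$, and subsequences of decreasing sequences are decreasing. For the infimum, I would exploit the sandwich $S_m(u)\le P_m(u)\le Q_m(u)$ available for every $m\in M_u$. The left inequality combined with $S_m(u)\ge T(u)$, which holds because $T(u)$ is the infimum of the decreasing sequence containing $S_m(u)$, shows that $T(u)$ is a lower bound of $(P_m(u))_{m\in M_u}$. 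For the greatest-lower-bound property, any $v\in\mathcal{M}$ satisfying $v\le P_m(u)$ for every $m\in M_u$ must also satisfy $v\le Q_m(u)$ for every such $m$, and since $(Q_m(u))_{m\in M_u}$ has infimum $T(u)$, this forces $v\le T(u)$.

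Combining these observations gives $P_m(u)\downarrow T(u)$ along the density-one set $M_u$, i.e.\ $P_n\dopc T$. I do not foresee a serious obstacle: the two sides of the squeeze play asymmetric roles, with $(S_n)$ contributing only the routine lower bound $T(u)\le P_m(u)$, while $(Q_n)$ supplies the nontrivial upper-bound information that pins the infimum down to exactly $T(u)$. The only point requiring care is restricting to the single density-one set $M_u$ where the information from both $\dopc$ hypotheses is simultaneously available, a device already employed in Theorem~\ref{thm:operator-uniqueness}.
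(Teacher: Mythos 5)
Your proposal is correct and follows essentially the same route as the paper's proof: fix $u\in\mathcal{L}_+$, intersect the two density-one witness sets, use $T(u)\le S_m(u)\le P_m(u)\le Q_m(u)$ to pin down $\inf_m P_m(u)=T(u)$, and invoke the standing hypothesis that $(P_n)$ is decreasing for monotonicity along the intersection. Your greatest-lower-bound phrasing of the final step is just a minor rewording of the paper's passage to the infimum.
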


\begin{proof}
Let $u \in \mathcal{L}_+$ be arbitrary. Since $S_n \dopc T$ and $Q_n \dopc T$, there exist subsets $J_u, M_u \subseteq \mathbb{N}$ with natural densities one such that $S_j(u)\downarrow T(u)$ on $J_u$ and $Q_m(u)\downarrow T(u)$ on $M_u$. Let $K_u := J_u \cap M_u$. Since the intersection of two sets of natural density one has density one, we have $\delta(K_u) = 1$. For every $k \in K_u$, since $S_k(u)$ decreases to $T(u)$, we have $S_k(u) \ge T(u)$. Combining this with the hypothesis, we obtain the inequalities:
$$
T(u) \le S_k(u) \leq P_k(u) \leq Q_k(u).
$$
Since $(Q_n)$ decreases to $T$ on $K_u$, passing to the infimum over $k \in K_u$ yields:
$$
T(u) \le \inf_{k \in K_u} P_k(u) \leq \inf_{k \in K_u} Q_k(u) = T(u).
$$
Thus, $\inf_{k \in K_u} P_k(u) = T(u)$. Since $(P_n)$ is decreasing by hypothesis, the subsequence $(P_k(u))_{k \in K_u}$ is decreasing. So we have $P_k(u) \downarrow T(u)$ on the set $K_u$ with $\delta(K_u)=1$. This implies $P_n \dopc T$.
\end{proof}

\begin{theorem}\label{thm:composition}
Let $(S_n)$ be a sequence in $\mathcal{L}_b(\mathcal{L}, \mathcal{M})$, and let $T: \mathcal{M} \to \mathcal{N}$ be a positive $\sigma$-order continuous operator. Then the following statements hold:
\begin{enumerate}[(i)]
\item If $S_n \dopc S$, then $T \circ S_n \dopc T \circ S$.
\item If $S_n \doc S$, then $T \circ S_n \doc T \circ S$.
\end{enumerate}
\end{theorem}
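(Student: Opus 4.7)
The plan is to handle both parts by the same core reduction: pass from the density-one set on which monotone order convergence occurs to a genuine decreasing sequence, to which the $\sigma$-order continuity of $T$ can be applied directly. Both composites $T\circ S_n$ automatically lie in $\mathcal{L}_b(\mathcal{L},\mathcal{N})$ since $T$, being positive, is order bounded, and compositions of order bounded operators are order bounded.

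For part $(i)$, I would fix $u\in \mathcal{L}_+$. The hypothesis $S_n\dopc S$ furnishes a density-one set $J_u\subseteq \mathbb{N}$ such that $(S_j(u))_{j\in J_u}$ decreases to $S(u)$ in $\mathcal{M}$. Enumerating $J_u=\{j_1<j_2<\cdots\}$, the subsequence $(S_{j_k}(u))_k$ is an honest decreasing sequence in $\mathcal{M}$ with infimum $S(u)$, hence $S_{j_k}(u)\oc S(u)$. Applying $\sigma$-order continuity of $T$ yields $T(S_{j_k}(u))\oc T(S(u))$, and positivity of $T$ makes this sequence decreasing. A decreasing order-convergent sequence necessarily decreases to its limit, so $T(S_{j_k}(u))\downarrow T(S(u))$; thus $(T\circ S_n)(u)\downarrow (T\circ S)(u)$ along $J_u$. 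Since $u\in \mathcal{L}_+$ was arbitrary, this establishes $T\circ S_n \dopc T\circ S$.

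Part $(ii)$ proceeds along the same lines but with a single density-one set. From $S_n\doc S$ I would extract $J\subseteq \mathbb{N}$ with $\delta(J)=1$ and $(S_j)_{j\in J}$ decreasing to $S$ in the operator order. Positivity of $T$ immediately gives that $(T\circ S_j)_{j\in J}$ is decreasing in $\mathcal{L}_b(\mathcal{L},\mathcal{N})$. To identify the infimum, I would use the pointwise description of infima of decreasing operator sequences (the same result from \cite[Thm. VIII.2.3]{Vu} already invoked in Theorem \ref{implication}), reducing the problem to showing $\inf_{j\in J} T(S_j(u))=T(S(u))$ for every $u\in \mathcal{L}_+$. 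But this is exactly what was proved in part $(i)$, so $\inf_{j\in J}(T\circ S_j)=T\circ S$, i.e., $T\circ S_n\doc T\circ S$.

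The only real obstacle is the mild mismatch between the standard definition of $\sigma$-order continuity, which is phrased for sequences, and our hypotheses, which only supply monotone order convergence along a density-one subset. This is handled cleanly by the enumeration trick above, turning the restricted family into a genuine decreasing sequence; everything else reduces to positivity, the pointwise infimum result, and the elementary observation that a decreasing order-convergent sequence decreases to its order limit.
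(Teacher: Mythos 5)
Your proof is correct and follows essentially the same route as the paper's: positivity of $T$ gives monotonicity of the composites, $\sigma$-order continuity identifies the pointwise infimum, and the pointwise determination of infima of decreasing operator sequences lifts this to the operator order. Your explicit enumeration of the density-one set into a genuine decreasing sequence simply makes precise a step the paper leaves implicit when it applies $\sigma$-order continuity along $J$.
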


\begin{proof}
We prove $(ii)$ in detail; the proof of $(i)$ follows by an analogous pointwise argument. Assume $S_n \doc S$. Then there exists a subset $J \subseteq \mathbb{N}$ with $\delta(J) = 1$ such that $(S_j)$ decreases to $S$. 

First, let us establish monotonicity. Since $T$ is a positive operator, it is order preserving. Therefore, for any $j, m \in J$ with $j < m$, the inequality $S_m \leq S_j$ implies $T(S_m) \leq T(S_j)$, which means $T \circ S_m \leq T \circ S_j$. Thus, the sequence of operators $(T \circ S_j)_{j \in J}$ is decreasing.

Next, we identify the order limit. Fix an arbitrary $u \in \mathcal{L}_+$. Since $S_j \downarrow S$ on $J$, we have $S_j(u) \downarrow S(u)$ in $\mathcal{M}$. By the $\sigma$-order continuity of $T$, we obtain:
$$
\inf_{j \in J} (T \circ S_j)(u) = \inf_{j \in J} T(S_j(u)) = T(\inf_{j \in J} S_j(u)) = T(S(u)) = (T \circ S)(u).
$$
Since this equality holds for every $u \in \mathcal{L}_+$, and $(T \circ S_j)$ is decreasing, it follows that
$$
\inf_{j \in J} (T \circ S_j) = T \circ S.
$$
This implies that $T \circ S_n \doc T \circ S$.
\end{proof}

\begin{theorem}\label{thm:right-composition}
Let $(S_n)$ be a sequence in $\mathcal{L}_b(\mathcal{L},\mathcal{M})$ such that $S_n \doc S$. If $P: \mathcal{L} \to \mathcal{L}$ is a positive operator, then the sequence $(S_n \circ P)$ is statistically order decreasing to $S \circ P$.
\end{theorem}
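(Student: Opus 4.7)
The plan is to mimic the structure of the proof of Theorem~\ref{thm:composition}, but on the right-hand side: first extract the density-one index set $J$ along which $(S_n)$ decreases to $S$ in the operator order, then check decreasingness of $(S_n\circ P)$ on $J$, and finally identify the operator infimum as $S\circ P$.

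Since $S_n \doc S$, there exists $J \subseteq \mathbb{N}$ with $\delta(J)=1$ such that $S_j \downarrow S$ in $\mathcal{L}_b(\mathcal{L},\mathcal{M})$ along $j \in J$. To establish monotonicity of the composed sequence, take $j,m \in J$ with $j<m$. Then $S_m \leq S_j$, so for every $u \in \mathcal{L}_+$ we have $P(u) \in \mathcal{L}_+$ by positivity of $P$, whence
$$
(S_m \circ P)(u) = S_m(P(u)) \leq S_j(P(u)) = (S_j \circ P)(u).
$$
Since this holds for all $u \in \mathcal{L}_+$, we conclude $S_m \circ P \leq S_j \circ P$ in the operator order, so the sequence $(S_j \circ P)_{j \in J}$ is decreasing in $\mathcal{L}_b(\mathcal{L},\mathcal{M})$.

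Next, I identify the infimum. Fix $u \in \mathcal{L}_+$. Invoking once more the fact that the infimum of a decreasing sequence of operators is attained pointwise (the same result from \cite[Thm. VIII.2.3]{Vu} used in Theorem~\ref{implication}), applied to the positive element $P(u)$, gives
$$
\inf_{j \in J} (S_j \circ P)(u) = \inf_{j \in J} S_j(P(u)) = \Bigl(\inf_{j \in J} S_j\Bigr)(P(u)) = S(P(u)) = (S \circ P)(u).
$$
Hence $(S_j \circ P)(u) \downarrow (S\circ P)(u)$ for every $u \in \mathcal{L}_+$ along $J$, and combined with the established monotonicity in the operator order, this yields $\inf_{j \in J}(S_j \circ P) = S \circ P$ in $\mathcal{L}_b(\mathcal{L},\mathcal{M})$. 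Since $\delta(J)=1$, we conclude $S_n \circ P \doc S \circ P$.

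I do not anticipate a serious obstacle here: unlike Theorem~\ref{thm:composition}, no continuity hypothesis on $P$ is needed, because $P$ only acts on the inputs and the order-limit behavior happens entirely on the operator side, where the pointwise-infimum principle already does the work. The only subtle point is checking that positivity of $P$ is enough to transport the operator inequality $S_m \leq S_j$ through composition on the right, which it is because evaluation of the composed operators at $u \in \mathcal{L}_+$ reduces to evaluation of $S_m, S_j$ at the positive element $P(u)$.
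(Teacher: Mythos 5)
Your proof is correct and follows essentially the same route as the paper's: extract the density-one set $J$, use positivity of $P$ to transport the decreasingness of $(S_j)$ through right composition, and identify the infimum via the pointwise-infimum principle for decreasing operator sequences applied at $P(u)$. Your version is if anything slightly more explicit than the paper's in verifying that $(S_j\circ P)_{j\in J}$ is decreasing in the operator order before invoking that principle.
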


\begin{proof}
Since $S_n \doc S$, there exists a subset $J \subseteq \mathbb{N}$ with natural density $\delta(J) = 1$ such that $S_j \downarrow S$ on $J$. Let $P$ be a positive operator on $\mathcal{L}$ and fix an arbitrary $u \in \mathcal{L}_+$. Since $P$ is positive, we have $v := P(u) \in \mathcal{L}_+$. Consequently, the sequence $(S_j(v))_{j \in J}$ is decreasing in $\mathcal{M}$. Furthermore, recalling that the infimum of a decreasing sequence of operators is determined pointwise (see \cite[Thm. VIII.2.3]{Vu}), we obtain
$$
\inf_{j \in J} (S_j \circ P)(u) = \inf_{j \in J} S_j(P(u)) = (\inf_{j \in J} S_j)(P(u)) = S(P(u)) = (S \circ P)(u).
$$
Therefore, for every $u \in \mathcal{L}_+$, we have $\inf_{j \in J} (S_j \circ P)(u) = (S \circ P)(u)$. Since the infimum of a decreasing sequence of operators is determined pointwise, we conclude that $\inf_{j \in J} (S_j \circ P) = S \circ P$, which implies $S_n \circ P \doc S \circ P$.
\end{proof}

\begin{theorem}\label{thm:wedge-zero}
Let $\mathcal{M}$ be a Dedekind complete Riesz space and let $T \in \mathcal{L}_b(\mathcal{L}, \mathcal{M})$ be a positive operator. If $S_n \doc \theta$, then $S_n \wedge T \doc \theta$.
\end{theorem}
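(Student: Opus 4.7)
The plan is to unpack the definition of $S_n \doc \theta$ to obtain a density-one set $J \subseteq \mathbb{N}$ along which $(S_j)$ decreases to $\theta$ in the operator order, and then transfer this monotone structure to the sequence $(S_j \wedge T)$ along the same set $J$. Since $\mathcal{M}$ is Dedekind complete, $\mathcal{L}_b(\mathcal{L}, \mathcal{M})$ is itself a Dedekind complete Riesz space, so all meets and infima used below are legitimate operators in this space.

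First, I would verify monotonicity along $J$. The lattice meet is isotone in each argument, so whenever $j, m \in J$ with $j < m$, the relation $S_m \le S_j$ forces $S_m \wedge T \le S_j \wedge T$. Hence $(S_j \wedge T)_{j \in J}$ is a decreasing sequence in $\mathcal{L}_b(\mathcal{L}, \mathcal{M})$.

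Next, I would identify the infimum. The key is the standard Riesz space identity: if $(a_j) \downarrow a$ in a Dedekind complete Riesz space and $b$ is fixed, then $a_j \wedge b \downarrow a \wedge b$. I would verify this quickly by a greatest-lower-bound argument: $a \wedge b \le a_j \wedge b$ for every $j$, and any lower bound $c$ of $(a_j \wedge b)_{j \in J}$ satisfies $c \le b$ and $c \le a_j$ for each $j$, hence $c \le \inf_{j \in J} a_j \wedge b = a \wedge b$. Applying this with $a_j = S_j$, $a = \theta$, and $b = T$, and using that $T \ge \theta$ implies $\theta \wedge T = \theta$, I obtain
$$
\inf_{j \in J}(S_j \wedge T) \;=\; \Bigl(\inf_{j \in J} S_j\Bigr) \wedge T \;=\; \theta \wedge T \;=\; \theta.
$$
Combined with the monotonicity established above, this gives $(S_j \wedge T) \downarrow \theta$ along $J$, and since $\delta(J) = 1$, we conclude $S_n \wedge T \doc \theta$ by definition.

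There is no serious obstacle: the argument is essentially a direct application of the meet-infimum distributivity available in a Dedekind complete Riesz space, combined with the definition of statistical order decrease. The only point requiring care is that this distributivity must be invoked in the operator lattice $\mathcal{L}_b(\mathcal{L}, \mathcal{M})$, which is why the Dedekind completeness of $\mathcal{M}$ is an essential hypothesis; without it, the infimum $\inf_{j \in J}(S_j \wedge T)$ need not exist as an operator, and the equality with $\theta$ could not be formulated.
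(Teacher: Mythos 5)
Your proof is correct and follows essentially the same route as the paper's: extract the density-one set $J$, use isotonicity of $\wedge$ to get that $(S_j \wedge T)_{j \in J}$ decreases, and then show the infimum is $\theta$. The only cosmetic difference is in the last step, where the paper uses the squeeze $\theta \le S_j \wedge T \le S_j$ while you invoke the (equally elementary) identity $\inf_{j}(S_j \wedge T) = \bigl(\inf_{j} S_j\bigr) \wedge T$ via a greatest-lower-bound argument.
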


\begin{proof}
Assume $S_n \doc \theta$. Then there exists a subset $J \subseteq \mathbb{N}$ with $\delta(J) = 1$ such that $(S_n)$ decreases to $\theta$ along $J$. Since $(S_n)$ decreases to zero on $J$, we have $S_j \ge \theta$ for all $j \in J$. Define $R_n := S_n \wedge T$. Since $T \ge \theta$ and $S_j \ge \theta$, it follows that $R_j \ge \theta$ for every $j \in J$.

Consider any indices $j, m \in J$ with $j < m$. Since the lattice operation $\wedge$ is order preserving and $S_m \le S_j$, we obtain:
$$
R_m = S_m \wedge T \le S_j \wedge T = R_j.
$$
Thus, the sequence $(R_n)$ is decreasing along $J$. Furthermore, for each $j \in J$, the inequalities $\theta \le S_j \wedge T \le S_j$ hold. Since $\inf_{j \in J} S_j = \theta$, passing to the infimum over $J$ yields:
$$
\theta \le \inf_{j \in J} (S_j \wedge T) \le \inf_{j \in J} S_j = \theta.
$$
Consequently, $\inf_{j \in J} R_j = \theta$. This concludes that $S_n \wedge T \doc \theta$.
\end{proof}
%%%%%%%%%%%%%%%%%%%%%%%%%%%%%%%%%%%%%%%%%%%%%%%%%%%%%%%%%%%%%%%%%%%%%%%%%%%%%%%%%%%%%%%%%%
%%%%%%%%%%%%%%%%%%%%%%%%%%%%%%%%%%%%%%%%%%%%%%%%%%%%%%%%%%%%%%%%%%%%%%%%%%%%%%%%%%%%%%%
\section{Statistical order convergence}\label{Sec:3}
In this section, we define statistical order convergence for sequences in $\mathcal{L}_b(\mathcal{L}, \mathcal{M})$ and investigate their fundamental properties, such as uniqueness of the limit, the relationship with order convergence, and the behavior under lattice operations. We also provide an example to illustrate that statistical order convergence is strictly weaker than order convergence. Throughout this section, unless stated otherwise, we assume that $\mathcal{M}$ is a Dedekind complete Riesz space, which ensures that $\mathcal{L}_b(\mathcal{L}, \mathcal{M})$ is also Dedekind complete Riesz space (see, e.g., \cite[Thm. 1.18]{AB1}).
\begin{definition}
Let $(R_n)$ be a sequence in $\mathcal{L}_b(\mathcal{L},\mathcal{M})$. Then $(R_n)$ is called:
\begin{enumerate}[(i)]
\item \textit{statistically order convergent} to $R$, denoted by $R_n \soc R$, if there exists a sequence $(S_n)$ in $\mathcal{L}_b(\mathcal{L},\mathcal{M})$ such that $S_n \doc \theta$ and a subset $J \subseteq \mathbb{N}$ with $\delta(J)=1$ satisfying
$$
|R_j-R|\leq S_j
$$
for all $j\in J$.
\item \textit{statistically order pointwise convergent} to $R$, denoted by $R_n \socp R$, if, for any $u\in \mathcal{L}_+$, there exists a sequence $(S_n)$ in $\mathcal{L}_b(\mathcal{L},\mathcal{M})$ such that $S_n \dopc \theta$ and a subset $J_u \subseteq \mathbb{N}$ with $\delta(J_u)=1$ satisfying
$$
|R_j(u)-R(u)|\leq S_j(u)
$$
for all $j\in J_u$.
\end{enumerate}
\end{definition}

Intuitively, statistical order convergence relaxes the strict condition of classical order convergence by allowing the inequality to hold on a set of natural density one, rather than for all indices.
\begin{theorem}\label{thm:implication-soc}
Let $(R_n)$ be a sequence in $\mathcal{L}_b(\mathcal{L}, \mathcal{M})$. If $R_n \soc R$, then $R_n \socp R$.
\end{theorem}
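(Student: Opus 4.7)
The plan is to show that every ingredient of statistical order convergence has a natural pointwise counterpart, so that the same witnesses $(S_n)$ and index set $J$ already verify the pointwise definition.

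First I would unpack the hypothesis $R_n \soc R$: it gives a sequence $(S_n)$ in $\mathcal{L}_b(\mathcal{L},\mathcal{M})$ with $S_n \doc \theta$ and a set $J \subseteq \mathbb{N}$ of density one such that $|R_j - R| \le S_j$ in the operator order for all $j \in J$. Since $\mathcal{M}$ is Dedekind complete, the modulus $|R_j - R|$ is well defined in $\mathcal{L}_b(\mathcal{L},\mathcal{M})$, and the operator order means exactly that $|R_j - R|(u) \le S_j(u)$ for every $u \in \mathcal{L}_+$.

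Next, I would use the standard pointwise estimate for the modulus of an operator: for any $u \in \mathcal{L}_+$,
$$
|R_j(u) - R(u)| = |(R_j - R)(u)| \le |R_j - R|(u).
$$
Combining this with the bound from the hypothesis yields $|R_j(u) - R(u)| \le S_j(u)$ for all $j \in J$ and all $u \in \mathcal{L}_+$. At this point the only remaining task is to produce the required statistical order pointwise decreasing witness.

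For this I would invoke Theorem \ref{implication}: since $S_n \doc \theta$, we also have $S_n \dopc \theta$. Therefore, for any fixed $u \in \mathcal{L}_+$, taking $J_u := J$ (which has $\delta(J_u) = 1$) and keeping the same sequence $(S_n)$ gives exactly the data required by the definition of $R_n \socp R$. There is no genuine obstacle here; the only subtle point is correctly linking the operator-level inequality $|R_j - R| \le S_j$ to its pointwise consequence via the inequality $|T(u)| \le |T|(u)$ for positive $u$, and citing Theorem \ref{implication} to transfer the monotone convergence from the operator order to the pointwise order.
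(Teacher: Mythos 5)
Your proposal is correct and follows essentially the same route as the paper's own proof: unpack the definition of $R_n \soc R$, pass from the operator inequality $|R_j - R| \le S_j$ to the pointwise bound via $|(R_j-R)(u)| \le |R_j-R|(u)$, and invoke Theorem \ref{implication} to convert $S_n \doc \theta$ into $S_n \dopc \theta$. No gaps; the same witnesses $(S_n)$ and $J_u := J$ are used in both arguments.
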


\begin{proof}
Assume $R_n \soc R$. Then there exists a sequence $S_n \doc \theta$ and a subset $J \subseteq \mathbb{N}$ with $\delta(J)=1$ such that $|R_j-R| \leq S_j$ for all $j \in J$. Fix an arbitrary $u \in \mathcal{L}_+$. Since the inequality holds in the operator order, we have
$$
|R_j-R|(u) \leq S_j(u)
$$
for all $j \in J$. Recalling the modulus inequality $|S(u)| \le |S|(u)$ (see, e.g., \cite[Thm. 1.18]{AB1}), we obtain
$$
|R_j(u)-R(u)| = |(R_j - R)(u)| \le |R_j - R|(u) \leq S_j(u)
$$
for all $j \in J$. By Theorem \ref{implication}, the convergence $S_n \doc \theta$ implies $S_n \dopc \theta$. Thus, the sequence $(S_n)$ serves as the required control sequence for the definition of statistical order pointwise convergence. Therefore, we obtain $R_n \socp R$.
\end{proof}

The converse of Theorem \ref{thm:implication-soc} does not hold in general; that is, statistical order pointwise convergence does not imply statistical order convergence.
\begin{example}\label{ex:counter-socp-not-soc}
Consider the Riesz spaces $\mathcal{L} := c_0$, the space of null sequences, and $\mathcal{M} := \mathbb{R}$. Then the order dual $\mathcal{L}^\sim = \mathcal{L}_b(c_0, \mathbb{R})$ is isometrically Riesz isomorphic to $\ell_1$ (see, e.g., \cite[p. 67]{AB2}). Let $(R_n)$ be the sequence of coordinate functionals in $\mathcal{L}_b(c_0, \mathbb{R})$ defined by $R_n(x) = x_n$ for all $x=(x_k) \in c_0$.

First, we show that $R_n \socp \theta$. Fix any $u \in \mathcal{L}_+$. Then $R_n(u) = u_n$. Since $u \in c_0$, we have $u_n \to 0$ in $\mathbb{R}$. Moreover, we can explicitly define $S_n(u) := \sup_{m \ge n} |R_m(u)|$. This sequence satisfies $S_n(u) \downarrow 0$ and $|R_n(u)| \le S_n(u)$ for all $n$. Thus, $R_n(u) \xrightarrow{o} 0$, which implies $R_n \socp \theta$.

Now suppose, for contradiction, that $R_n \soc \theta$. Then there exist a sequence $(W_n)$ in $\ell_1$ with $W_n \doc \theta$ and a subset $J \subseteq \mathbb{N}$ with $\delta(J)=1$ such that $|R_j| \le W_j$ for every $j \in J$. Since $W_n \doc \theta$, there exists a subset $K \subseteq \mathbb{N}$ with $\delta(K)=1$ such that $W_k \downarrow \theta$ on $K$. Let $M := J \cap K$. Since the intersection of two sets of density one has density one, $\delta(M)=1$. Let $m_0 = \min M$. Since $(W_k)$ is decreasing on $M$, for any $n \in M$ with $n \ge m_0$, we have
$$
W_n \le W_{m_0}.
$$
Recall that $|R_n|$ corresponds to the standard basis vector $e_n$ in $\ell_1$. Thus, for $n \in M$, we have $e_n = |R_n| \le W_n \le W_{m_0}$. This implies that the $n$-th component of the vector $W_{m_0} \in \ell_1$ is at least $1$ for infinitely many indices $n$ because $M$ is infinite. Consequently, the series $\sum_{k=1}^\infty |(W_{m_0})_k|$ diverges, which contradicts the fact that $W_{m_0} \in \ell_1$. Thus, $(R_n)$ does not statistically order converge to zero.
\end{example}

\begin{lemma}\label{lem:linearity-soc}
Let $(R_n)$ and $(T_n)$ be sequences in $\mathcal{L}_b(\mathcal{L}, \mathcal{M})$ such that $R_n \soc R$ and $T_n \soc T$, and let $\alpha, \beta \in \mathbb{R}$. Then $\alpha R_n + \beta T_n \soc \alpha R + \beta T$.
\end{lemma}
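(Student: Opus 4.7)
The plan is to reduce everything to Lemma \ref{lem:additivity}, which already delivers additivity and positive homogeneity for the $\doc$-convergence of dominating sequences. First, I would unpack the two hypotheses: $R_n \soc R$ yields a sequence $(S_n^1)$ with $S_n^1 \doc \theta$ and a set $J_1$ with $\delta(J_1)=1$ such that $|R_j-R|\le S_j^1$ for all $j\in J_1$, and similarly $T_n \soc T$ yields $(S_n^2)$ with $S_n^2 \doc \theta$ and $J_2$ with $\delta(J_2)=1$ such that $|T_j-T|\le S_j^2$ for $j\in J_2$. Set $J:=J_1\cap J_2$; the standard fact that the intersection of two density-one sets has density one gives $\delta(J)=1$.

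Next, I would combine the two inequalities using the triangle inequality and the identity $|\lambda A|=|\lambda|\,|A|$ in the Riesz space $\mathcal{L}_b(\mathcal{L},\mathcal{M})$. For every $j\in J$,
$$
|\alpha R_j+\beta T_j-(\alpha R+\beta T)|\le |\alpha|\,|R_j-R|+|\beta|\,|T_j-T|\le |\alpha|S_j^1+|\beta|S_j^2.
$$
Define the candidate control sequence $S_n:=|\alpha|S_n^1+|\beta|S_n^2$. Since $|\alpha|,|\beta|\ge 0$, the positive homogeneity part of Lemma \ref{lem:additivity} gives $|\alpha|S_n^1 \doc \theta$ and $|\beta|S_n^2 \doc \theta$, and then additivity gives $S_n \doc \theta$. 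Combined with the density-one estimate above on $J$, this is exactly the definition of $\alpha R_n+\beta T_n \soc \alpha R+\beta T$.

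The only subtle point — and I would flag it explicitly but not belabor it — is the handling of negative scalars: Lemma \ref{lem:additivity} only guarantees positive homogeneity of $\doc$, so one cannot directly write $\alpha S_n^1 \doc \alpha\theta$ when $\alpha<0$. Taking absolute values on the operator side before invoking the lemma, as above, side-steps this issue cleanly, since the modulus $|\alpha R_j+\beta T_j-\alpha R-\beta T|$ only ever sees $|\alpha|$ and $|\beta|$. No Dedekind completeness or $\sigma$-order continuity is needed for this step; the argument is purely structural.
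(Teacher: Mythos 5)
Your argument is correct and is exactly the natural one: the paper states Lemma \ref{lem:linearity-soc} without proof, and your reduction via the triangle inequality, the identity $|\lambda A|=|\lambda|\,|A|$, the intersection of two density-one sets, and the additivity/positive-homogeneity of $\doc$ from Lemma \ref{lem:additivity} is precisely the omitted standard argument. Your handling of negative scalars by passing to $|\alpha|,|\beta|$ before invoking the positive-homogeneity clause is the right way to avoid the restriction $\alpha\ge 0$ in that lemma.
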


\begin{theorem}\label{thm:lattice-soc}
Let $(R_n)$ and $(T_n)$ be sequences in $\mathcal{L}_b(\mathcal{L}, \mathcal{M})$ such that $R_n \soc R$ and $T_n \soc T$. Then the following hold:
\begin{enumerate}[(i)]
\item $|R_n| \soc |R|$,
\item $R_n \vee T_n \soc R \vee T$,
\item $R_n \wedge T_n \soc R \wedge T$.
\end{enumerate}
\end{theorem}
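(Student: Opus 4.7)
The plan is to reduce all three statements to the definition by exhibiting, in each case, a control sequence that statistically order decreases to $\theta$ and an index set of natural density one on which the relevant modulus is dominated by the control sequence. The basic inputs are: a sequence $(U_n)$ with $U_n \doc \theta$ and a set $J \subseteq \NN$ with $\delta(J)=1$ such that $|R_j-R|\le U_j$ for all $j\in J$, and analogously $(V_n)$ with $V_n \doc \theta$ and $K \subseteq \NN$ with $\delta(K)=1$ such that $|T_k-T|\le V_k$ for all $k\in K$. Setting $M := J\cap K$, we still have $\delta(M)=1$.

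For part (i), I would invoke the classical Birkhoff (reverse triangle) inequality in Riesz spaces, namely $\bigl||R_j|-|R|\bigr|\le |R_j-R|$, which holds in $\mathcal{L}_b(\mathcal{L},\mathcal{M})$ because this space is itself a Riesz space when $\mathcal{M}$ is Dedekind complete. Combining this with $|R_j-R|\le U_j$ on $J$ gives $\bigl||R_j|-|R|\bigr|\le U_j$ for all $j \in J$, which is precisely the definition of $|R_n|\soc |R|$ with the same control sequence $(U_n)$ and the same set $J$.

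For parts (ii) and (iii), the main tool is the standard lattice estimate
\[
|a\vee b - c\vee d| \le |a-c| + |b-d|, \qquad |a\wedge b - c\wedge d| \le |a-c| + |b-d|,
\]
valid in every Riesz space and therefore applicable in $\mathcal{L}_b(\mathcal{L},\mathcal{M})$. Applied with $a=R_m$, $b=T_m$, $c=R$, $d=T$, this yields, for every $m\in M$,
\[
|R_m\vee T_m - R\vee T| \le |R_m-R| + |T_m-T| \le U_m + V_m,
\]
and the analogous inequality for the meet. By Lemma \ref{lem:additivity}, $U_n+V_n \doc \theta$, so $(U_n+V_n)$ is a legitimate control sequence on $M$, proving $R_n\vee T_n \soc R\vee T$ and $R_n\wedge T_n \soc R\wedge T$.

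No real obstacle arises here: the whole argument hinges on (a) the reverse triangle and lattice Lipschitz inequalities in Riesz spaces, (b) the fact that the intersection of two density-one sets still has density one, and (c) the additivity of $\doc$ already established in Lemma \ref{lem:additivity}. The only mildly delicate point is ensuring these inequalities hold for the operators themselves (not just pointwise), which is justified because $\mathcal{L}_b(\mathcal{L},\mathcal{M})$ is a Dedekind complete Riesz space under the standing assumption on $\mathcal{M}$; one could alternatively note that $(R_n\wedge T_n)(u) = R_n(u)\wedge T_n(u)$ is not generally true, but the Riesz-space inequalities above do not require such pointwise formulas.
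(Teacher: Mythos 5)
Your proposal is correct and follows essentially the same route as the paper's proof: the reverse triangle inequality $\bigl||R_j|-|R|\bigr|\le|R_j-R|$ for part (i), and the lattice estimate $|(x\vee y)-(a\vee b)|\le|x-a|+|y-b|$ together with the intersection of the two density-one sets and Lemma~\ref{lem:additivity} for parts (ii) and (iii). No substantive differences to report.
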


\begin{proof}
$(i)$ From the assumption $R_n \soc R$, there exist a sequence $S_n \doc \theta$ and a subset $J \subseteq \mathbb{N}$ with $\delta(J)=1$ such that $|R_j - R| \leq S_j$ for all $j \in J$. Using the basic lattice inequality $||x| - |y|| \leq |x - y|$, we obtain
$$
||R_j| - |R|| \leq |R_j - R| \leq S_j \quad \text{for every } j \in J.
$$
Since $S_n \doc \theta$, we conclude that $|R_n| \soc |R|$.

$(ii)$ We use the inequality $|(x \vee y) - (a \vee b)| \leq |x - a| + |y - b|$ (see, e.g., \cite[Thm. 12.4(ii)]{LZ}). Since $R_n \soc R$ and $T_n \soc T$, there exist sequences $S_n \doc \theta$, $Q_n \doc \theta$ and sets $J, K \subseteq \mathbb{N}$ with density one such that
$$
|R_j - R| \leq S_j \quad \text{and} \quad |T_k - T| \leq Q_k 
$$
for all $j \in J$ and $k \in K$. Let $M := J \cap K$. Then $\delta(M)=1$, and for any $m \in M$, we have
$$
|(R_m \vee T_m) - (R \vee T)| \leq |R_m - R| + |T_m - T| \leq S_m + Q_m.
$$
Define $H_n := S_n + Q_n$. By Lemma \ref{lem:additivity}, since $S_n \doc \theta$ and $Q_n \doc \theta$, we have $H_n \doc \theta$. Thus, $R_n \vee T_n \soc R \vee T$.

$(iii)$ The proof follows similar arguments to (ii) by using the inequality $|(x \wedge y) - (a \wedge b)| \leq |x - a| + |y - b|$.
\end{proof}

\begin{corollary}\label{cor:positive-negative-parts}
If $(R_n)$ is a sequence in $\mathcal{L}_b(\mathcal{L}, \mathcal{M})$ satisfying $R_n \soc R$, then we have $R_n^+ \soc R^+$ and $R_n^- \soc R^-$.
\end{corollary}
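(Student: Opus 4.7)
The plan is to derive both statements as immediate consequences of the lattice and linearity results already established in this section, treating the positive part as a special case of a supremum with the zero operator.

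First, I would verify the trivial but essential observation that the constant zero sequence $(\theta)$ in $\mathcal{L}_b(\mathcal{L}, \mathcal{M})$ is statistically order convergent to $\theta$; this is witnessed by the control sequence $S_n = \theta$ (which in turn is statistically order decreasing to $\theta$ along $J = \mathbb{N}$) and the inequality $|\theta - \theta| \leq \theta$. Then, since $R_n^+ = R_n \vee \theta$ and $R^+ = R \vee \theta$, applying Theorem \ref{thm:lattice-soc}(ii) to the pair $(R_n)$ and the constant sequence $(\theta)$ yields $R_n^+ \soc R^+$.

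For the negative part, I would first apply Lemma \ref{lem:linearity-soc} with $\alpha = -1$ and $\beta = 0$ to obtain $-R_n \soc -R$. Then, repeating the argument from the first paragraph with $(-R_n)$ in place of $(R_n)$, I get $(-R_n)^+ \soc (-R)^+$. Since $R_n^- = (-R_n)^+$ and $R^- = (-R)^+$, this is precisely $R_n^- \soc R^-$. No genuine obstacle arises here; the result is essentially a bookkeeping consequence of the lattice stability in Theorem \ref{thm:lattice-soc} together with linearity from Lemma \ref{lem:linearity-soc}, so the main task is simply to identify the correct specializations of those results.
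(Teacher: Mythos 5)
Your proof is correct and follows the route the paper intends: the corollary is stated without proof as an immediate consequence of Theorem \ref{thm:lattice-soc}, and your specializations (the constant sequence $(\theta)$ for $R_n^+ = R_n \vee \theta$, and Lemma \ref{lem:linearity-soc} with $\alpha=-1$ for $R_n^- = (-R_n)^+$) are exactly the right ones. No gaps.
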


It is evident that any order convergent sequence in $\mathcal{L}_b(\mathcal{L}, \mathcal{M})$ is necessarily statistically order convergent. However, the converse implication fails in general, as demonstrated by the following example.
\begin{example}\label{ex:converse-fails}
Let $\mathcal{L} = \mathcal{M} = \mathbb{R}^2$ equipped with the coordinate-wise ordering. Let $I$ denote the identity operator on $\mathbb{R}^2$ and define the sequence of operators $(R_n)$ by:
$$
R_n :=
\begin{cases}
nI, & n = k^2 \text{ for some } k \in \mathbb{N}, \\
\frac{1}{n+1}I, & \text{otherwise}.
\end{cases}
$$
The sequence $(R_n)$ does not order converge to $\theta$ because the subsequence $(R_{k^2})$ is unbounded in the operator order. However, the set of indices $J:=\{n \in \mathbb{N} : n \neq k^2\}$ has natural density one.

Define a sequence $(S_n)$ in $\mathcal{L}_b(\mathcal{L}, \mathcal{M})$ by $S_n = \frac{1}{n}I$ for all $n \in \mathbb{N}$. Clearly, $S_n \downarrow \theta$ in $\mathcal{L}_b(\mathcal{L}, \mathcal{M})$, which implies $S_n \doc \theta$. Moreover, for any $j \in J$, we have $R_j = \frac{1}{j+1}I$. Since $I$ is a positive operator, we have 
$$
|R_j - \theta| = R_j = \frac{1}{j+1}I \leq \frac{1}{j}I = S_j.
$$
Therefore, the inequality $|R_j - \theta| \leq S_j$ holds for all $j \in J$, a set of density one. Thus, we conclude that $R_n \soc \theta$.
\end{example}

\begin{theorem}\label{thm:monotonicity-soc}
If $(R_n)$ is an increasing sequence in $\mathcal{L}_b(\mathcal{L},\mathcal{M})$ and $R_n \soc R$, then $R_n \xrightarrow{o} R$.
\end{theorem}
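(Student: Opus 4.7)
The plan is to reduce the statement to proving that $R_n\uparrow R$ in the operator order. Once this is established, the sequence $v_n:=R-R_n$ satisfies $v_n\downarrow\theta$ and $|R_n-R|=v_n$, so $R_n\xrightarrow{o}R$ follows immediately from the definition of order convergence.

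Unpacking the hypothesis, $R_n\soc R$ supplies a sequence $(S_n)$ with $S_n\doc\theta$ and a set $J\subseteq\mathbb{N}$ with $\delta(J)=1$ such that $|R_j-R|\le S_j$ on $J$. By definition of $\doc$, there is a further density-one set $J'$ on which $(S_j)_{j\in J'}$ is decreasing with infimum $\theta$. Setting $K:=J\cap J'$, we have $\delta(K)=1$, and
$$
R-R_j \;\le\; |R_j-R|\;\le\; S_j \qquad\text{for every }j\in K.
$$

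The first substantive step is to show $R_m\le R$ for every $m$. Fixing $m$, for any $j\in K$ with $j\ge m$, the monotonicity of $(R_n)$ yields $R_m-R\le R_j-R\le S_j$. Since $(S_j)_{j\in J'}$ is decreasing with infimum $\theta$ and $K\cap[m,\infty)$ is a cofinal subset of $J'$, its infimum over this subset is still $\theta$, and we conclude $R_m-R\le\theta$. Dedekind completeness of $\mathcal{L}_b(\mathcal{L},\mathcal{M})$ (guaranteed by the standing assumption on $\mathcal{M}$) then produces $R^*:=\sup_n R_n\le R$. Running the bound once more, $R-R^*\le R-R_j\le S_j$ for every $j\in K$, so taking the infimum over $K$ yields $R-R^*\le\theta$ and hence $R^*=R$. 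This establishes $R_n\uparrow R$, and the conclusion follows as outlined in the first paragraph.

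The main obstacle, though modest, will be justifying the infimum computation at the heart of the argument: I need that restricting the decreasing family $(S_j)_{j\in J'}$ to the cofinal subset $K\cap[m,\infty)$ preserves the infimum $\theta$. This rests on two observations: first, a cofinal restriction of a decreasing sequence in a Riesz space has the same infimum; second, $K=J\cap J'$ is cofinal in $J'$ because $\delta(K)=1$ forces $K$ to meet every tail of $\mathbb{N}$. An alternative route, slightly more streamlined, would be to invoke Theorem~\ref{thm:decomposition} to replace $(S_n)$ outright by a genuinely decreasing sequence $(P_n)$ with $P_n\downarrow\theta$ and $S_n=P_n$ on a density-one set, but the structural argument above is essentially the same.
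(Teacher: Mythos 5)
Your proof is correct, and it shares the paper's overall strategy: reduce everything to showing $R_n \uparrow R$ in the operator order, after which $v_n := R - R_n \downarrow \theta$ gives order convergence immediately. The execution differs in a worthwhile way, however. The paper's proof invokes the subsequence characterization of statistical order convergence (Theorem~\ref{thm:characterization}) to extract a density-one set $J$ along which $(R_j)$ order converges to $R$, then uses that the order limit of an increasing (sub)sequence is its supremum to get $R = \sup_{j \in J} R_j$, and finally sandwiches $\sup_n R_n$ between $\sup_{j\in J} R_j$ and $R$. You instead work directly from the definition of $\soc$, bounding $R_m - R$ and $R - R^*$ by the dominating sequence $(S_j)$ and computing infima over the cofinal density-one set $K = J \cap J'$; your justification that a cofinal restriction of a decreasing sequence preserves the infimum is the one nontrivial lattice fact needed, and it is standard and correctly applied. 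Two things your route buys: it avoids a forward reference, since Theorem~\ref{thm:characterization} is stated and proved \emph{after} this theorem in the paper; and it makes explicit where Dedekind completeness of $\mathcal{L}_b(\mathcal{L},\mathcal{M})$ enters (to form $\sup_n R_n$), a point the paper's proof glosses over when it writes $\sup_{n\in\mathbb{N}} R_n$. The paper's version is shorter once the characterization theorem is available, but yours is self-contained.
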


\begin{proof}
Assume that $R_n \soc R$. By the subsequence characterization of statistical order convergence, there exists a subset $J= \{j_1 < j_2 < \dots\} \subseteq \mathbb{N}$ with $\delta(J)=1$ such that the subsequence $(R_j)$ order converges to $R$. By hypothesis, the sequence $(R_n)$ is increasing, i.e., $R_n \uparrow$, we have $R_j \uparrow$ because any subsequence of an increasing sequence is also increasing. Recalling that the order limit of an increasing sequence coincides with its supremum, we obtain
$$
R= \sup_{j\in J} R_j.
$$
Now, fix an arbitrary $n \in \mathbb{N}$. Since the set $J$ has natural density one, it is infinite and unbounded. Thus, we can choose an index $j_0 \in J$ such that $j_0 \ge n$. Using the monotonicity of $(R_n)$, we have:
$$ 
R_n \leq R_{j_0} \leq \sup_{j\in J} R_j =R. 
$$
This inequality $R_n \leq R$ holds for all $n \in \mathbb{N}$, which implies that $R$ is an upper bound for the sequence $(R_n)$. On the other hand, since $\{R_j:j\in J\} \subseteq \{R_n : n \in \mathbb{N}\}$, the supremum of the subsequence cannot exceed the supremum of the entire sequence. Thus, we have
$$
R= \sup_{j \in J} R_j \leq \sup_{n \in \mathbb{N}} R_n \leq R.
$$
This implies $\sup_{n} R_n = R$. Consequently, since $(R_n)$ is increasing and its supremum is $R$, we conclude that $R_n \uparrow R$, which means $R_n\oc R$.
\end{proof}

The following theorem establishes the uniqueness of the statistical order limit, which is a fundamental requirement for any convergence notion.
\begin{theorem}\label{thm:soc-uniqueness}
Let $(R_n)$ be a sequence in $\mathcal{L}_b(\mathcal{L}, \mathcal{M})$. If $R_n \soc R_1$ and $R_n \soc R_2$, then $R_1=R_2$.
\end{theorem}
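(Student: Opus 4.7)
The plan is to run a standard triangle-inequality argument for the modulus, adapted to the statistical setting by working on the intersection of the two density-one sets produced by the hypotheses. Concretely, I would unpack the hypothesis $R_n \soc R_1$ to obtain a control sequence $(S_n^{(1)})$ with $S_n^{(1)} \doc \theta$ and a set $J_1 \subseteq \mathbb{N}$ with $\delta(J_1)=1$ such that $|R_j - R_1| \le S_j^{(1)}$ for every $j \in J_1$, and analogously get $(S_n^{(2)})$, $J_2$ from $R_n \soc R_2$. By the lattice inequality $|R_1 - R_2| \le |R_1 - R_j| + |R_j - R_2|$, for every $j \in J_1 \cap J_2$ we obtain
\[
|R_1 - R_2| \le S_j^{(1)} + S_j^{(2)}.
\]

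Next, I would invoke Lemma \ref{lem:additivity} to conclude that the sum $H_n := S_n^{(1)} + S_n^{(2)}$ satisfies $H_n \doc \theta$. Thus there is a set $K \subseteq \mathbb{N}$ with $\delta(K)=1$ on which $(H_k)$ decreases to $\theta$ in $\mathcal{L}_b(\mathcal{L},\mathcal{M})$. Setting $M := J_1 \cap J_2 \cap K$ we still have $\delta(M)=1$, so $M$ is infinite, and the restricted sequence $(H_m)_{m \in M}$ is a decreasing sequence cofinal in $(H_k)_{k \in K}$, hence $\inf_{m \in M} H_m = \theta$. Since $|R_1 - R_2| \le H_m$ for every $m \in M$, passing to the infimum gives $|R_1 - R_2| \le \theta$, so $R_1 = R_2$.

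There is no genuine obstacle here; the argument is a straightforward transcription of the classical uniqueness proof for order limits to the operator-statistical context. The only point requiring a brief justification is that the infimum of $(H_m)$ over the sub-density-one set $M$ still equals $\theta$, which follows because $M$ is cofinal in $K$ and $(H_k)_{k\in K}$ is decreasing to $\theta$. Everything else, linearity of the control sequences and the density of the intersection $J_1 \cap J_2 \cap K$, has already been established earlier in the paper (Lemma \ref{lem:additivity} and the routine density-one intersection property used repeatedly in Section \ref{Sec:2}).
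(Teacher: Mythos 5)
Your proposal is correct and follows essentially the same route as the paper's proof: unpack both hypotheses, apply the triangle inequality on the intersection of the density-one sets, use Lemma \ref{lem:additivity} to get the sum of control sequences statistically order decreasing to $\theta$, and pass to the infimum over a final density-one intersection. Your explicit cofinality justification for why the infimum over the smaller density-one set is still $\theta$ is a welcome refinement of a step the paper leaves implicit, but it is not a different argument.
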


\begin{proof}
Suppose $R_n \soc R_1$ and $R_n \soc R_2$. By definition, there exist sequences $(S_n)$ and $(Q_n)$ in $\mathcal{L}_b(\mathcal{L}, \mathcal{M})$ with $S_n \doc \theta$, $Q_n \doc \theta$ and subsets $J, K \subseteq \mathbb{N}$ with $\delta(J) = \delta(J) = 1$ such that
$$
|R_j- R_1| \leq S_j \quad \text{and} \quad |R_k - R_2| \leq Q_k
$$
for all $j \in J$ and for all $k \in K$, respectively. Let $M:=J\cap K$. Since the intersection of two sets of density one has density one, we have $\delta(M)=1$. For any $m \in M$, using the triangle inequality, we have:
$$
|R_1 - R_2| \leq |R_1 - R_m| + |R_m - R_2| \leq S_m+ Q_m.
$$
Since $S_n \doc \theta$ and $Q_n \doc \theta$, it follows from Lemma \ref{lem:additivity} that $(S_n +Q_n) \doc \theta$. Hence, there exists a subset $N \subseteq \mathbb{N}$ with $\delta(N)=1$ such that $\inf_{n \in N} (S_n + Q_n) = \theta$. Let $D :=M\cap N$. Then $\delta(D)=1$, and for any $d \in D$, the inequality $|R_1 - R_2| \leq S_d + Q_d$ holds. Passing to the infimum over $d \in D$, we obtain
$$ 
|R_1 - R_2| \leq \inf_{d \in D} (S_d + Q_d) = \theta. 
$$
Thus, $|R_1 - R_2| = \theta$, which implies $R_1 = R_2$.
\end{proof}

The relationship between order convergence and statistical order convergence is characterized by the following theorem, adapted from the sequence case in \cite[Thm. 5]{SP}.
\begin{theorem}\label{thm:characterization}
Let $(R_n)$ be a sequence in $\mathcal{L}_b(\mathcal{L}, \mathcal{M})$. Then $R_n \soc R$ if and only if there exists a sequence $(T_n)$ in $\mathcal{L}_b(\mathcal{L}, \mathcal{M})$ such that $T_n \oc R$ and $\delta(\{n \in \mathbb{N} : R_n =T_n\}) = 1$.
\end{theorem}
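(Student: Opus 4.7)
The plan is to prove both directions by essentially transferring between a density-one set of "good" indices and a globally controlling decreasing sequence, using the decomposition result for statistically order decreasing sequences (Theorem \ref{thm:decomposition}).

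For the sufficiency direction (the easier one), I would start with a sequence $(T_n)$ in $\mathcal{L}_b(\mathcal{L},\mathcal{M})$ satisfying $T_n \oc R$, together with a set $J := \{n \in \mathbb{N} : R_n = T_n\}$ of density one. By definition of order convergence, there exists a decreasing sequence $(v_n)$ in $\mathcal{L}_b(\mathcal{L},\mathcal{M})$ with $v_n \downarrow \theta$ and $|T_n - R| \leq v_n$ for every $n$. Since $v_n \downarrow \theta$ in the operator order, one trivially has $v_n \doc \theta$ (take the witnessing density-one set to be all of $\mathbb{N}$). Then for every $j \in J$ we have $|R_j - R| = |T_j - R| \leq v_j$, which is exactly the definition of $R_n \soc R$ with controlling sequence $(v_n)$ and density-one index set $J$.

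For the necessity direction, suppose $R_n \soc R$, so there exist a sequence $(S_n)$ with $S_n \doc \theta$ and a set $J \subseteq \mathbb{N}$ of density one such that $|R_j - R| \leq S_j$ for all $j \in J$. Apply Theorem \ref{thm:decomposition} to $(S_n)$: there exist a decreasing sequence $(P_n)$ in $\mathcal{L}_b(\mathcal{L},\mathcal{M})$ with $P_n \downarrow \theta$ and a set $K \subseteq \mathbb{N}$ of density one with $S_n = P_n$ for all $n \in K$. Set $J' := J \cap K$, which still has density one. The idea is now to modify $(R_n)$ on the complement of $J'$ so that the bound extends to every index. Define
$$
T_n := \begin{cases} R_n, & n \in J', \\ R, & n \notin J'. \end{cases}
$$
For $n \in J'$ we have $|T_n - R| = |R_n - R| \leq S_n = P_n$, and for $n \notin J'$ we have $|T_n - R| = \theta \leq P_n$ (since $P_n \geq \theta$ by $P_n \downarrow \theta$). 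Hence $|T_n - R| \leq P_n$ for \emph{every} $n \in \mathbb{N}$, which, combined with $P_n \downarrow \theta$, gives $T_n \oc R$. By construction $\{n : R_n = T_n\} \supseteq J'$, so this set has density one.

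I do not expect serious obstacles; the only subtle point is that the witness $(S_n)$ from the definition of statistical order convergence need not itself be decreasing, which is precisely why I invoke Theorem \ref{thm:decomposition} to replace $(S_n)$ by a truly decreasing sequence $(P_n)$ that agrees with it on a density-one set. The modification trick — setting $T_n = R$ off the good set — is what promotes a bound holding on a density-one set of indices to a bound holding on all indices, which is what order convergence requires.
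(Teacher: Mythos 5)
Your proof is correct and follows essentially the same strategy as the paper: both directions rest on the same modification trick of setting $T_n := R$ off a density-one set, and the sufficiency argument is identical. The only variation is in how the globally decreasing dominating sequence is produced in the necessity direction: the paper builds it by hand as the tail suprema $Y_n = \sup_{k \ge n} Q_k$ (invoking Dedekind completeness of $\mathcal{L}_b(\mathcal{L},\mathcal{M})$), whereas you cite the necessity direction of Theorem \ref{thm:decomposition} --- which the paper states but leaves unproved as ``standard'' --- to replace $(S_n)$ by a genuinely decreasing $(P_n)$ agreeing with it on a density-one set; both routes are valid and yield the same conclusion.
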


\begin{proof}
Suppose such a sequence $(T_n)$ exists. Let $J:= \{n \in \mathbb{N}:R_n=T_n\}$. Since $T_n \oc R$, there exists a sequence $W_n \downarrow \theta$ such that $|T_n-R| \leq W_n$ for all $n \in \mathbb{N}$. Since order decreasing convergence implies statistical order decreasing convergence, we have $W_n \doc \theta$. Moreover, for every $j\in J$,
$$
|R_j-R| = |T_j - R| \leq W_j.
$$
Since $\delta(J)=1$, this implies $R_n \soc R$.

Assume $R_n \soc R$. Then there exist a sequence $S_n \doc \theta$ and a subset $J \subseteq \mathbb{N}$ with $\delta(J)=1$ such that $|R_j - R| \leq S_j$ for all $j \in J$. Since $S_n \doc \theta$, there exists a subset $M \subseteq \mathbb{N}$ with $\delta(M)=1$ such that $(S_m)$ decreases to $\theta$ on $M$. Let $D := J \cap M$. Then $\delta(D)=1$. Define a new sequence $(T_n)$ by
$$
T_n = \begin{cases} 
R_n, & n \in D, \\ 
R,   &  n \notin D.
\end{cases}
$$
Clearly $D \subseteq \{ n : R_n = T_n \}$, and hence $\delta(\{ n : R_n = T_n \}) = 1$. We now show that $T_n \oc R$. Define an auxiliary sequence $(Q_n)$ by
$$
Q_n = \begin{cases} 
S_n, & n \in D, \\ 
\theta, & n \notin D.
\end{cases}
$$
For every $n \in \mathbb{N}$, the inequality $|T_n - R| \leq Q_n$ holds. Indeed, if $n \in D$, then $|T_n - R| = |R_n - R| \leq S_n = Q_n$ and  if $n \notin D$, then $T_n = R$, and so we have  $|T_n - R| = \theta = Q_n$. Since $\mathcal{L}_b(\mathcal{L}, \mathcal{M})$ is Dedekind complete, the tail supremum
$$
Y_n := \sup_{k \ge n} Q_k
$$
is well-defined for all $n \in \mathbb{N}$. Since $(S_m)_{m \in M}$ decreases to zero and $D \subseteq M$, for each $n$, the set $\{ Q_k : k \ge n \}$ is bounded above by $S_{d_n}$, where $d_n := \min\{\, k \in D : k \ge n \,\}$. In fact, since $S_m$ decreases on $D$, we have $Y_n = S_{d_n}$. As $n \to \infty$, we must have $d_n \to \infty$. Since $S_{d_n} \downarrow \theta$, it follows that $Y_n \downarrow \theta$. Finally, from the inequalities $|T_n - R| \leq Q_n \leq Y_n$ and the fact that $Y_n \downarrow \theta$, we conclude that $T_n \oc R$.
\end{proof}

\begin{theorem}\label{thm:soc-squeeze}
Let $(R_n), (U_n), (T_n)$ be sequences in $\mathcal{L}_b(\mathcal{L}, \mathcal{M})$ such that $R_n \leq U_n \leq T_n$ for all $n \in \mathbb{N}$. If $R_n \soc R$ and $T_n \soc R$, then $U_n \soc R$.
\end{theorem}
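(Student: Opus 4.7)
The plan is to reduce the squeeze theorem to the definition of statistical order convergence together with the additivity property of statistical order decreasing convergence established in Lemma \ref{lem:additivity}. The key algebraic step is to bound $|U_n - R|$ from above by a non-negative linear combination of $|R_n - R|$ and $|T_n - R|$, so that the two given control sequences can be combined into a single one that decreases statistically to $\theta$.

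First, I would derive the operator inequality. Starting from the hypothesis $R_n \leq U_n \leq T_n$, subtracting $R_n$ gives $\theta \leq U_n - R_n \leq T_n - R_n$, hence $|U_n - R_n| = U_n - R_n \leq T_n - R_n$. Applying the triangle inequality in the Riesz space $\mathcal{L}_b(\mathcal{L}, \mathcal{M})$ and then rewriting $T_n - R_n = (T_n - R) - (R_n - R)$, I obtain
\[
|U_n - R| \leq |U_n - R_n| + |R_n - R| \leq (T_n - R_n) + |R_n - R| \leq |T_n - R| + 2|R_n - R|.
\]

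Second, I would invoke the hypotheses $R_n \soc R$ and $T_n \soc R$ to extract control sequences $(S_n)$ and $(W_n)$ in $\mathcal{L}_b(\mathcal{L}, \mathcal{M})$ with $S_n \doc \theta$ and $W_n \doc \theta$, together with subsets $J, K \subseteq \mathbb{N}$ of natural density one on which $|R_j - R| \leq S_j$ for $j \in J$ and $|T_k - R| \leq W_k$ for $k \in K$. Setting $M := J \cap K$, the intersection has density one. For every $m \in M$, the inequality from the first step combined with the control bounds yields
\[
|U_m - R| \leq |T_m - R| + 2|R_m - R| \leq W_m + 2S_m.
\]

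Finally, I would apply Lemma \ref{lem:additivity} (additivity and positive homogeneity of statistical order decreasing convergence) to conclude that the auxiliary sequence $H_n := W_n + 2 S_n$ satisfies $H_n \doc \theta$. Since $|U_m - R| \leq H_m$ holds on the density-one set $M$, the definition of statistical order convergence gives $U_n \soc R$. The proof is essentially routine once the correct operator inequality is in place; the only mildly delicate point is the passage from the one-sided bound $R_n \leq U_n \leq T_n$ to a symmetric bound on the modulus $|U_n - R|$, and I expect no substantive obstacle beyond that.
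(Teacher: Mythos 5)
Your proof is correct, and it reaches the conclusion by a somewhat more hands-on route than the paper. The paper argues at the level of the convergence calculus: from $\theta \leq U_n - R_n \leq T_n - R_n$ and linearity (Lemma \ref{lem:linearity-soc}) it gets $T_n - R_n \soc \theta$, then invokes a domination principle (a nonnegative sequence bounded above by a sequence with $\soc$-limit $\theta$ itself has $\soc$-limit $\theta$) to conclude $U_n - R_n \soc \theta$, and finishes with linearity again. That domination step is asserted in the paper as following ``by the definition'' but is not unwound. Your argument instead works directly with the definition throughout: you derive the explicit operator inequality $|U_n - R| \leq |T_n - R| + 2|R_n - R|$, intersect the two density-one index sets, and exhibit the single control sequence $H_n := W_n + 2S_n$, which Lemma \ref{lem:additivity} shows satisfies $H_n \doc \theta$. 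What your version buys is self-containedness --- it fills in exactly the step the paper glosses over --- at the cost of a slightly longer inequality chain. (A minor remark: the factor $2$ can be avoided, since $R_n - R \leq U_n - R \leq T_n - R$ already gives $|U_n - R| \leq |R_n - R| \vee |T_n - R| \leq S_n + W_n$ on the intersection, but this changes nothing of substance.)
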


\begin{proof}
From the hypothesis, the inequalities $\theta \leq U_n - R_n \leq T_n - R_n$ hold for all $n \in \mathbb{N}$. Since statistical order convergence is linear; see Lemma \ref{lem:linearity-soc}, the assumptions $R_n \soc R$ and $T_n \soc R$ imply that $T_n - R_n\soc R-R= \theta$. Consequently, the non-negative sequence $(U_n - R_n)$ is dominated by a sequence that statistically order converges to zero. By the definition of statistical order convergence, it follows that $U_n - R_n \soc \theta$. Finally, writing $U_n = (U_n - R_n) + R_n$ and applying linearity again, we obtain $U_n \soc \theta + R = R$.
\end{proof}

\begin{theorem}\label{thm:disjointness}
Let $(R_n)$ be a sequence in $\mathcal{L}_b(\mathcal{L},\mathcal{M})$ and $U\in \mathcal{L}_b(\mathcal{L},\mathcal{M})$. If $R_n \soc R$ and $R_n \perp U$ for all $n \in \mathbb{N}$, then $R \perp U$.
\end{theorem}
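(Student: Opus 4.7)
The plan is to reduce the disjointness relation to a lattice identity and then chain together the lattice stability and uniqueness results already proved in this section. Recall that in a Riesz space $R\perp U$ is defined by the equation $|R|\wedge|U|=\theta$. The hypothesis $R_n\perp U$ for all $n$ therefore says that the sequence $\bigl(|R_n|\wedge|U|\bigr)$ is identically equal to the zero operator $\theta$ in $\mathcal{L}_b(\mathcal{L},\mathcal{M})$, which under the standing assumption that $\mathcal{M}$ is Dedekind complete is a well-defined Riesz space.

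First, I would apply Theorem \ref{thm:lattice-soc}(i) to obtain $|R_n|\soc|R|$ from $R_n\soc R$. Next, the constant sequence $U_n:=U$ trivially satisfies $U_n\soc U$, so Theorem \ref{thm:lattice-soc}(iii) applied to $(|R_n|)$ and the constant sequence $(|U|)$ yields
$$
|R_n|\wedge|U| \;\soc\; |R|\wedge|U|.
$$
On the other hand, by hypothesis $|R_n|\wedge|U|=\theta$ for every $n$, and the constant zero sequence is obviously statistically order convergent to $\theta$. By the uniqueness of the statistical order limit established in Theorem \ref{thm:soc-uniqueness}, we must have $|R|\wedge|U|=\theta$, i.e., $R\perp U$.

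There is no real obstacle here; the proof is essentially a three-line chase through the tools of Section 3. The only care needed is to invoke the Dedekind completeness of $\mathcal{M}$ (hence of $\mathcal{L}_b(\mathcal{L},\mathcal{M})$) when forming $|R_n|$ and $|R_n|\wedge|U|$, but this is the blanket hypothesis of the section. An alternative approach would be to use the characterization in Theorem \ref{thm:characterization} to replace $(R_n)$ by an order convergent sequence $(T_n)$ with $R_n=T_n$ on a set of density one, note that $T_n\perp U$ on this set, and then pass to the order limit using that the band generated by $U^{\perp}$ is order closed; but this is strictly more work and I would favor the direct lattice-operations argument above.
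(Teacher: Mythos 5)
Your proof is correct, but it takes a genuinely different route from the paper's. You stay entirely inside the statistical framework: you deduce $|R_n|\soc|R|$ from Theorem \ref{thm:lattice-soc}(i), combine it with the trivially convergent constant sequence $|U|\soc|U|$ via Theorem \ref{thm:lattice-soc}(iii) to get $|R_n|\wedge|U|\soc|R|\wedge|U|$, and then conclude $|R|\wedge|U|=\theta$ from the uniqueness of the statistical order limit (Theorem \ref{thm:soc-uniqueness}), since the sequence $(|R_n|\wedge|U|)$ is identically $\theta$. The paper instead takes the ``alternative approach'' you sketch at the end: it invokes Theorem \ref{thm:characterization} to extract a density-one set $J$ along which $(R_j)$ order converges to $R$, and then uses the order continuity of the lattice operations in the Dedekind complete Riesz space $\mathcal{L}_b(\mathcal{L},\mathcal{M})$ to pass the identity $|R_j|\wedge|U|=\theta$ to the limit. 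Your argument has the advantage of being self-contained within the results already proved in Section \ref{Sec:3} and of not needing the external fact about order continuity of lattice operations; the paper's argument is a quick reduction to the classical order-convergent setting once the characterization theorem is available. Both are valid under the section's blanket assumption that $\mathcal{M}$ is Dedekind complete, which you correctly flag as the point where the moduli and infima are guaranteed to exist.
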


\begin{proof}
Assume that $R_n \soc R$ and $|R_n| \wedge |U| = \theta$ for all $n \in \mathbb{N}$. By Theorem \ref{thm:characterization}, there exists a subset $J\subseteq \mathbb{N}$ with $\delta(J)=1$ such that the subsequence $(R_j)$ order converges to $R$ in $\mathcal{L}_b(\mathcal{L},\mathcal{M})$. Since $\mathcal{M}$ is Dedekind complete, the lattice operations in $\mathcal{L}_b(\mathcal{L},\mathcal{M})$ are order continuous (see, e.g., \cite[Thm. 15.3]{LZ}). Hence, we obtain
$$
|R_j| \wedge |U| \oc |R| \wedge |U|
$$
as $j \to \infty$. By the hypothesis, $|R_j| \wedge |U| = \theta$ for all $j$, which implies that the order limit is zero. Therefore, we have $|R| \wedge |U| = \theta$. This shows that $R \perp U$.
\end{proof}

Recall that a linear subspace $\mathcal{B}$ of a Riesz space is called an {\em ideal} if $|x| \le |y|$ and $y \in \mathcal{B}$ imply $x \in \mathcal{B}$. An ideal $\mathcal{B}$ is called a {\em band} if it is order closed.
\begin{theorem}\label{thm:band-closed}
Let $\mathcal{B}$ be a band in $\mathcal{L}_b(\mathcal{L},\mathcal{M})$. If a sequence $(R_n)$ in $\mathcal{B}$ is statistically order convergent to an operator $R$, then $R \in \mathcal{B}$.
\end{theorem}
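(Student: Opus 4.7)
My plan is to deduce the result directly from the characterization in Theorem \ref{thm:characterization}, reducing statistical order convergence to genuine order convergence along a subsequence that lies entirely in $\mathcal{B}$, and then invoking the order-closedness of bands.

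More concretely, since $R_n \soc R$, Theorem \ref{thm:characterization} yields a sequence $(T_n)$ in $\mathcal{L}_b(\mathcal{L},\mathcal{M})$ with $T_n \oc R$ and $\delta(J)=1$, where $J:=\{n\in\mathbb{N}: R_n = T_n\}$. Since $\delta(J)=1$, the set $J$ is infinite, so I can regard $(T_j)_{j\in J}$ as a subsequence of $(T_n)$. For every $j\in J$ we have $T_j = R_j\in\mathcal{B}$, so this subsequence lies in $\mathcal{B}$.

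Next, I would verify that $(T_j)_{j\in J}$ is itself order convergent to $R$. This is a standard consequence of the fact that order convergence passes to subsequences: if $W_n\downarrow\theta$ with $|T_n-R|\le W_n$ for all $n$, then $(W_j)_{j\in J}$ is still decreasing, and its infimum coincides with $\inf_n W_n=\theta$ because for every $n\in\mathbb{N}$ one can choose $j\in J$ with $j\ge n$, giving $W_j\le W_n$, so any lower bound of $(W_j)_{j\in J}$ is a lower bound of $(W_n)$. Thus $W_j\downarrow\theta$ along $J$, and $|T_j-R|\le W_j$ for $j\in J$ gives $T_j\oc R$.

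Finally, since $\mathcal{B}$ is a band in $\mathcal{L}_b(\mathcal{L},\mathcal{M})$, it is order closed; in particular, the order limit of any sequence in $\mathcal{B}$ belongs to $\mathcal{B}$. Applying this to $(T_j)_{j\in J}\subseteq\mathcal{B}$ with order limit $R$ yields $R\in\mathcal{B}$, as required. The only subtle point is the subsequence argument for order convergence, but this is a straightforward consequence of the definition of order convergence via dominating null sequences, so I do not expect any real obstacle beyond bookkeeping.
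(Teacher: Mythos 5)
Your proposal is correct and follows essentially the same route as the paper: both apply Theorem \ref{thm:characterization} to extract a density-one subsequence lying in $\mathcal{B}$ that order converges to $R$, and then invoke the order closedness of the band. Your explicit verification that order convergence passes to the subsequence along $J$ is a detail the paper leaves implicit, but the argument is the same.
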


\begin{proof}
Suppose $(R_n)$ is a sequence in $\mathcal{B}$ and $R_n \soc R$. By Theorem \ref{thm:characterization}, there exists a subset $J \subseteq \mathbb{N}$ with $\delta(J)=1$ such that the subsequence $(R_j)$ order converges to $R$. Recall that a band in a Riesz space is an order closed ideal. Since $(R_j)$ is a sequence in $\mathcal{B}$ and it order converges to $R$, the order closedness of $\mathcal{B}$ forces the limit $R$ to belong to $\mathcal{B}$. Hence we have $R \in \mathcal{B}$.
\end{proof}

\begin{corollary}\label{cor:order-continuous}
Le $(R_n)$ be a sequence in the order continuous dual $\mathcal{L}_n(\mathcal{L},\mathcal{M})$. If $(R_n)$ statistically order converges to an operator $R \in \mathcal{L}_b(\mathcal{L},\mathcal{M})$, then $R$ is also order continuous.
\end{corollary}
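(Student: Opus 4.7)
The plan is to recognize this corollary as an immediate application of Theorem \ref{thm:band-closed}, once we identify the order continuous operators as a band. Specifically, I would invoke the classical result (see, e.g., \cite[Thm.\ 1.57]{AB1} or \cite[Thm.\ VIII.3.6]{Vu}) that under the standing assumption of this section, namely that $\mathcal{M}$ is Dedekind complete, the space $\mathcal{L}_n(\mathcal{L}, \mathcal{M})$ of order continuous operators is a band in $\mathcal{L}_b(\mathcal{L}, \mathcal{M})$. Taking this for granted, the hypothesis $(R_n) \subseteq \mathcal{L}_n(\mathcal{L}, \mathcal{M})$ together with $R_n \soc R$ places us exactly in the setting of Theorem \ref{thm:band-closed} with $\mathcal{B} := \mathcal{L}_n(\mathcal{L}, \mathcal{M})$.

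Applying that theorem then yields $R \in \mathcal{L}_n(\mathcal{L}, \mathcal{M})$, which is precisely the assertion that $R$ is order continuous. So the entire argument reduces to citing the band property and invoking Theorem \ref{thm:band-closed}.

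The only delicate point, which I would flag explicitly rather than rederive, is the distinction between order continuity and $\sigma$-order continuity: the band result holds cleanly for $\mathcal{L}_n(\mathcal{L}, \mathcal{M})$ because its order closedness inside $\mathcal{L}_b(\mathcal{L}, \mathcal{M})$ is classical. Since Theorem \ref{thm:band-closed} extracts from $R_n \soc R$ a subsequence indexed by a density-one set that order converges to $R$, and a band is order closed by definition, no further work is needed beyond citing the characterization of $\mathcal{L}_n(\mathcal{L}, \mathcal{M})$ as a band. I do not expect any real obstacle here; the corollary is essentially a labeling result pointing out that the preceding theorem applies to one of the most important bands in operator theory.
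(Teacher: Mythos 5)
Your proposal matches the paper's proof exactly: both identify $\mathcal{L}_n(\mathcal{L},\mathcal{M})$ as a band in $\mathcal{L}_b(\mathcal{L},\mathcal{M})$ under the standing Dedekind completeness assumption on $\mathcal{M}$ (the paper cites \cite[Thm. 84.2]{Za} for this) and then apply Theorem \ref{thm:band-closed}. The argument is correct and complete.
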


\begin{proof}
Suppose that $R_n \soc R$ holds for a sequence $(R_n)$ in $\mathcal{L}_n(\mathcal{L},\mathcal{M})$. Since $\mathcal{M}$ is Dedekind complete, it is a known fact that $\mathcal{L}_n(\mathcal{L},\mathcal{M})$ is a band in the Riesz space $\mathcal{L}_b(\mathcal{L},\mathcal{M})$ (see, e.g., \cite[Thm. 84.2]{Za}). Therefore, it follows immediately from Theorem \ref{thm:band-closed} that $R\in \mathcal{L}_n(\mathcal{L},\mathcal{M})$. Hence, $R$ is order continuous.
\end{proof}

\begin{theorem}\label{thm:composition-soc}
Let $(R_n)$ be a sequence in $\mathcal{L}_b(\mathcal{L}, \mathcal{M})$ with $R_n \soc R$, and let $T: \mathcal{M} \to \mathcal{N}$ be a positive $\sigma$-order continuous operator. Then $T \circ R_n \soc T \circ R$.
\end{theorem}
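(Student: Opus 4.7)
The strategy is to transport the control sequence in the definition of $R_n \soc R$ through composition with $T$ and then appeal to Theorem \ref{thm:composition}. Since $R_n \soc R$, fix a sequence $(S_n)$ in $\mathcal{L}_b(\mathcal{L}, \mathcal{M})$ with $S_n \doc \theta$ and a set $J \subseteq \mathbb{N}$ of density one such that $|R_j - R| \le S_j$ for every $j \in J$. My candidate control sequence for $T \circ R_n \soc T \circ R$ is $(T \circ S_n)$, witnessed on the same set $J$.

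The first ingredient, $T \circ S_n \doc \theta$, is immediate from Theorem \ref{thm:composition}(ii): $T$ is positive and $\sigma$-order continuous, so $S_n \doc \theta$ gives $T \circ S_n \doc T \circ \theta = \theta$. The second ingredient is the operator-order inequality
$$
|T \circ R_j - T \circ R| \le T \circ S_j \quad \text{for every } j \in J.
$$
For this I would establish, more generally, the compositional modulus bound $|T \circ B| \le T \circ |B|$ applied to $B := R_j - R$. Fixing $u \in \mathcal{L}_+$ and $v \in \mathcal{L}$ with $|v| \le u$, the chain
$$
|T(B(v))| \le T(|B(v)|) \le T(|B|(|v|)) \le T(|B|(u))
$$
uses positivity of $T$, the standard pointwise modulus inequality $|B(v)| \le |B|(|v|)$, and monotonicity of the positive operator $T \circ |B|$. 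Taking the supremum over all admissible $v$ yields $|T \circ B|(u) \le (T \circ |B|)(u)$, and hence $|T \circ B| \le T \circ |B|$ in the operator order. Combining this with $|R_j - R| \le S_j$ and positivity of $T$ delivers the required bound on $J$.

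With both ingredients in hand, the definition of statistical order convergence applied with control sequence $(T \circ S_n)$ and index set $J$ gives $T \circ R_n \soc T \circ R$. I do not expect any serious obstacle; the only mildly delicate step is the compositional modulus inequality $|T \circ B| \le T \circ |B|$, which is a routine consequence of the definition of the operator modulus in the Dedekind complete setting in force here.
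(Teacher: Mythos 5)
Your proposal is correct and follows essentially the same route as the paper: both use the control sequence $T\circ S_n$ on the same density-one set $J$, obtain $T\circ S_n \doc \theta$ from Theorem \ref{thm:composition}(ii), and conclude via the inequality $|T\circ(R_j-R)| \le T\circ|R_j-R| \le T\circ S_j$. The only difference is that you supply a Riesz--Kantorovich argument for the compositional modulus bound $|T\circ B|\le T\circ|B|$, which the paper simply cites as a standard property of positive operators.
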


\begin{proof}
Since $R_n \soc R$, there exist a sequence $S_n\doc\theta$ and a subset $J \subseteq \mathbb{N}$ with $\delta(J)=1$ such that $|R_j - R| \leq S_j$ for all $j \in J$. Since $T$ is a positive operator, it satisfies the inequality $|T \circ U| \leq T \circ |U|$ for any operator $U$. Using the linearity of $T$, we obtain:
$$
|T \circ R_j - T \circ R| = |T \circ (R_j - R)| \leq T \circ |R_j - R| \leq T \circ S_j
$$
for all $j \in J$. Define the sequence $Q_n := T \circ S_n$. Since $(S_n)$ decreases to $\theta$ and $T$ is $\sigma$-order continuous, it follows that $Q_n \downarrow \theta$; see Theorem \ref{thm:composition}$(ii)$. Moreover, the inequality $|T \circ R_j - T \circ R| \leq P_j$ holds on the set $J$. Thus, we conclude that $T \circ R_n \soc T \circ R$.
\end{proof}

\begin{theorem}\label{thm:band-projection}
Let $P:\mathcal{M} \to \mathcal{M}$ be a band projection. If $(R_n)$ is a sequence in $\mathcal{L}_b(\mathcal{L},\mathcal{M})$ such that $R_n \soc R$, then $P \circ R_n \soc P \circ R$.
\end{theorem}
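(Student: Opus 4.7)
My plan is to derive this result as a direct application of Theorem \ref{thm:composition-soc}. The key observation is that a band projection $P: \mathcal{M} \to \mathcal{M}$ is automatically a positive operator satisfying $\theta \le P \le I_{\mathcal{M}}$, and, more importantly, it is order continuous on $\mathcal{M}$ (in particular, $\sigma$-order continuous). Both facts are standard properties of band projections in Dedekind complete Riesz spaces; see, for instance, the discussion of band projections in \cite{AB1} or \cite{LZ}.

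First, I would note that $\mathcal{M}$ being Dedekind complete (our standing assumption in this section) guarantees that the band projection $P$ associated with any projection band of $\mathcal{M}$ exists and is well-defined. Next, I would invoke the two structural properties of $P$ mentioned above: positivity, which follows from $P(u) = u \wedge \sup\{nu^+ \wedge v : n \in \mathbb{N}, v \in \mathcal{B}\}$-type representations for $u \in \mathcal{M}_+$, and $\sigma$-order continuity, which follows from the identity $P(u_n) \uparrow P(u)$ whenever $u_n \uparrow u$ in $\mathcal{M}$, a standard property of projections onto bands.

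With these properties in hand, the conclusion is immediate: since $R_n \soc R$ in $\mathcal{L}_b(\mathcal{L}, \mathcal{M})$ and $P: \mathcal{M} \to \mathcal{M}$ is a positive $\sigma$-order continuous operator, Theorem \ref{thm:composition-soc} applied with $\mathcal{N} = \mathcal{M}$ and $T = P$ yields $P \circ R_n \soc P \circ R$, which is exactly the desired conclusion.

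I do not anticipate a serious obstacle here; the only thing worth being careful about is to justify rigorously (or cite) why a band projection is $\sigma$-order continuous, since Theorem \ref{thm:composition-soc} explicitly requires this hypothesis rather than mere positivity. Once that justification is in place, the proof is a one-line appeal to the previous theorem.
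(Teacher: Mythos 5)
Your proof is correct, and it takes a genuinely more economical route than the paper. You reduce the statement to a one-line instance of Theorem~\ref{thm:composition-soc}, using only the standard facts that a band projection is positive and $\sigma$-order continuous (the paper itself cites \cite[Thm.~1.44]{AB2} for exactly these properties, so your one remaining worry is easily discharged). The paper instead unfolds the definition again: it takes the dominating sequence $S_n \doc \theta$ with $|R_j - R| \le S_j$ on a density-one set, applies Theorem~\ref{thm:composition}(ii) to get $P \circ S_n \doc \theta$, and then uses the fact that a band projection is a lattice \emph{homomorphism}, so that the equality $|P \circ (R_j - R)| = P \circ |R_j - R|$ holds, rather than the mere inequality $|T \circ U| \le T \circ |U|$ available for a general positive $T$. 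That equality is not actually needed --- the inequality suffices, which is precisely why your appeal to the general Theorem~\ref{thm:composition-soc} goes through. What the paper's self-contained argument buys is an explicit record of the stronger structure present for band projections; what your argument buys is brevity and a cleaner logical dependence (the theorem is literally a corollary of the one preceding it). One cosmetic remark: your sketched formula for $P(u)$ in terms of suprema is garbled as written, but since you only use it as motivation for positivity and would in any case cite the standard reference, this does not affect the validity of the proof.
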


\begin{proof}
Assume $R_n \soc R$. Then there exist a sequence $(S_n)$ in $\mathcal{L}_b(\mathcal{L},\mathcal{M})$ with $S_n \doc \theta$ and a subset $J \subseteq \mathbb{N}$ with $\delta(J)=1$ such that $|R_j - R| \leq S_j$ for all $j \in J$. Recall that a band projection is positive and $\sigma$-order continuous (see, e.g., \cite[Thm. 1.44]{AB2}). By applying Theorem \ref{thm:composition}$(ii)$ to the operator $P$ and the sequence $(S_n)$, we obtain $P \circ S_n \doc \theta$.

Since every band projection is a lattice homomorphism (see, e.g., \cite[p. 94]{AB2}), the equality $|P \circ U| = P \circ |U|$ holds for any operator $U$. Using this property and the positivity of $P$, we obtain:
$$
|P \circ R_j - P \circ R| = |P \circ (R_j - R)| = P \circ |R_j - R| \leq P \circ S_j.
$$
for all indices $j \in J$. This implies that $P \circ R_n \soc P \circ R$.
\end{proof}

The paper is concluded with the introduction of statistically order boundedness for operator sequences and the establishment of essential related results.
\begin{definition}\label{def:stat_ob}
Let $(R_n)$ be a sequence in $\mathcal{L}_b(\mathcal{L},\mathcal{M})$. We say that $(R_n)$ is \textit{statistically order bounded} if there exists an operator $S\in\mathcal{L}_b(\mathcal{L},\mathcal{M})$ and a subset $J \subseteq \mathbb{N}$ with natural density $\delta(J)=1$ such that $|R_j|\leq S$ for every $j \in J$.
\end{definition}

It is clear that every order bounded sequence is statistically order bounded, but the converse is not true in general.
\begin{theorem}\label{thm:st_conv_implies_st_ob}
If a sequence $(R_n)$ in $\mathcal{L}_b(\mathcal{L},\mathcal{M})$ is statistically order convergent, then it is statistically order bounded.
\end{theorem}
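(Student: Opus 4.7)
The plan is to unpack the definition of statistical order convergence, exploit the monotonicity of the control sequence on a density-one set to extract a single dominating operator, and then apply the triangle inequality to bound $|R_n|$ uniformly on a density-one set.

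To begin, since $R_n \soc R$, I can find a sequence $(S_n)$ in $\mathcal{L}_b(\mathcal{L},\mathcal{M})$ with $S_n \doc \theta$ and a set $J \subseteq \mathbb{N}$ of density one such that $|R_j - R| \leq S_j$ for all $j \in J$. Next, unpacking $S_n \doc \theta$ yields a set $K \subseteq \mathbb{N}$ with $\delta(K)=1$ along which $(S_k)$ is decreasing with infimum $\theta$. Setting $M := J \cap K$, the standard fact that the intersection of two density-one sets has density one gives $\delta(M)=1$.

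The key move is to fix $m_0 := \min M$. Then for every $m \in M$, the monotonicity of $(S_k)_{k \in K}$ along $K \supseteq M$ guarantees $S_m \leq S_{m_0}$. Combining this with the triangle inequality $|R_m| \leq |R_m - R| + |R|$ and the bound $|R_m - R| \leq S_m$ valid on $M \subseteq J$, I obtain
$$
|R_m| \leq S_m + |R| \leq S_{m_0} + |R|
$$
for every $m \in M$. Defining $S := S_{m_0} + |R| \in \mathcal{L}_b(\mathcal{L},\mathcal{M})$, the inequality $|R_m| \leq S$ holds on a set of density one, which verifies Definition~\ref{def:stat_ob}.

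There is no substantive obstacle here; the argument is essentially a routine application of the characterizing definitions together with the triangle inequality. The only subtlety worth double-checking is that the infimum produced by $S_n \doc \theta$ really yields a pointwise-uniform dominator on the whole tail of $M$, but this is immediate from decreasing-ness along $K$ and the choice $m_0 = \min M$, so no separate lemma is needed.
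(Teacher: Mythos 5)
Your proof is correct and follows essentially the same route as the paper: extract the density-one set $M = J \cap K$, use the decreasingness of $(S_k)$ along $K$ to dominate the tail by a single operator, and conclude via the triangle inequality with $S = S_{m_0} + |R|$. The only cosmetic difference is that the paper anchors the bound at $k_0 = \min K$ rather than $\min M$, which changes nothing.
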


\begin{proof}
Assume that $R_n \soc R$. By definition, there exist a sequence $(S_n)$ in $\mathcal{L}_b(\mathcal{L},\mathcal{M})$ with $S_n \doc \theta$ and a subset $J \subseteq \mathbb{N}$ with $\delta(J)=1$ such that
$$
|R_j - R| \leq S_j 
$$
for all $j \in J$. Since $S_n \doc \theta$, there exists a subset $K \subseteq \mathbb{N}$ with $\delta(K)=1$ such that $(S_k)$ decreases to $\theta$ on $K$. Let $M:=J\cap K$. Then $\delta(M)=1$. Since $(S_k)$ is decreasing on $K$, the element $S_{k_0}$ for $k_0 = \min K$ serves as an upper bound for the tail. Thus, for every $m \in M$, we have
$$
||R_m| - |R||\leq |R_m - R| \leq S_m \leq S_{k_0}.
$$
This implies $|R_m| \leq |R| + S_{k_0}$ for all $m \in M$. Taking $S:= |T| + S_{k_0}$, we see that $(R_n)$ is bounded by $S$ on the set $M$ of density one. Hence, $(R_n)$ is statistically order bounded.
\end{proof}

\begin{theorem}\label{thm:decomposition}
Let $(R_n)$ be a statistically order bounded sequence in $\mathcal{L}_b(\mathcal{L},\mathcal{M})$. Then there exist an order bounded sequence $(T_n)$ and a sequence $(U_n)$ with $\delta(\{n \in \mathbb{N}: U_n \neq \theta\}) = 0$ such that $R_n = T_n + U_n$ for every $n \in \mathbb{N}$.
\end{theorem}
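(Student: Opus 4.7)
The plan is to construct the decomposition directly from the witness of statistical order boundedness. First, I would invoke Definition~\ref{def:stat_ob} to obtain an operator $S \in \mathcal{L}_b(\mathcal{L},\mathcal{M})$ and a subset $J \subseteq \mathbb{N}$ with $\delta(J) = 1$ such that $|R_j| \leq S$ for every $j \in J$. Observe that $S$ is necessarily positive: picking any $j_0 \in J$, we have $S \geq |R_{j_0}| \geq \theta$.

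Next, I would split $(R_n)$ along the set $J$ by setting
$$
T_n := \begin{cases} R_n, & n \in J, \\ \theta, & n \notin J, \end{cases} \qquad U_n := \begin{cases} \theta, & n \in J, \\ R_n, & n \notin J. \end{cases}
$$
The identity $R_n = T_n + U_n$ then holds for every $n \in \mathbb{N}$ directly from the definitions, so it remains only to verify the two asserted properties of the components.

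For order boundedness of $(T_n)$, I would note that if $n \in J$ then $|T_n| = |R_n| \leq S$, and if $n \notin J$ then $|T_n| = \theta \leq S$ because $S \geq \theta$; hence $|T_n| \leq S$ uniformly in $n$, so $(T_n)$ is order bounded by $S$. For the density statement, the inclusion $\{n \in \mathbb{N} : U_n \neq \theta\} \subseteq \mathbb{N} \setminus J$ together with $\delta(J)=1$ yields $\delta(\{n : U_n \neq \theta\}) \leq \delta(\mathbb{N} \setminus J) = 0$, as required.

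Since the argument is entirely constructive, I do not expect any genuine obstacle; the only subtlety worth flagging is that the bound $S$ witnessing statistical order boundedness must be recognized as positive, which guarantees that zeroing out $R_n$ on the negligible index set $\mathbb{N} \setminus J$ still leaves $(T_n)$ dominated by $S$.
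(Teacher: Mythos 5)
Your proposal is correct and follows essentially the same route as the paper: split $(R_n)$ along the density-one set $J$ into $(T_n)$ and $(U_n)$ exactly as you do, with $(T_n)$ dominated by the witness $S$ and $(U_n)$ supported on $\mathbb{N}\setminus J$. Your extra observation that $S\geq\theta$ (so that the zeroed-out terms remain dominated) is a small point the paper handles by taking $S$ in the positive cone from the outset, but the arguments are otherwise identical.
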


\begin{proof}
Since $(R_n)$ is statistically order bounded, there exists an operator $S \in \mathcal{L}_b(\mathcal{L},\mathcal{M})_+$ and a subset $J \subseteq \mathbb{N}$ with $\delta(J)=1$ such that $|T_j| \leq S$ for all $j \in J$. Let $K := \mathbb{N} \setminus J$. Then $\delta(K)=0$. Define two sequences $(T_n)$ and $(U_n)$ by:
$$
T_n = \begin{cases}
R_n, & \text{if } n \in J,\\
\theta,   & \text{if } n \in K,
\end{cases}
\qquad \text{and} \qquad
U_n = \begin{cases}
\theta,       & \text{if } n \in J,\\
R_n, & \text{if } n \in K.
\end{cases}
$$
It is clear that $R_n = T_n + U_n$ for every $n \in \mathbb{N}$. The sequence $(T_n)$ is order bounded because $|T_n| \leq S$ for all $n \in \mathbb{N}$. Moreover, $S_n \neq \theta$ can only happen when $n \in K$. Since $\delta(K)=0$, we have $\delta(\{n : S_n \neq \theta\}) = 0$.
\end{proof}

\begin{corollary}\label{cor:decomposition_st_conv}
Every statistically order convergent sequence in $\mathcal{L}_b(\mathcal{L},\mathcal{M})$ can be written as the sum of an order convergent sequence and a sequence that vanishes on a set of natural density one.
\end{corollary}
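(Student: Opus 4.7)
The plan is to derive this corollary as a direct consequence of the characterization theorem (Theorem \ref{thm:characterization}), which is tailor-made for exactly this kind of decomposition. There is essentially no new analytical content to produce; the work amounts to rearranging the output of that theorem into the additive form that the corollary requests.

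First, I would invoke Theorem \ref{thm:characterization} on the given statistically order convergent sequence $R_n \soc R$ to obtain a sequence $(T_n)$ in $\mathcal{L}_b(\mathcal{L}, \mathcal{M})$ with $T_n \oc R$ together with a set $D := \{n \in \mathbb{N} : R_n = T_n\}$ of natural density one. This $(T_n)$ is precisely the order convergent component required by the statement.

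Next, I would simply define $U_n := R_n - T_n$ for each $n \in \mathbb{N}$, so that trivially $R_n = T_n + U_n$. For every $n \in D$, we have $U_n = \theta$ by the defining property of $D$, hence $\{n \in \mathbb{N} : U_n = \theta\} \supseteq D$, which forces $\delta(\{n : U_n = \theta\}) = 1$; equivalently, $(U_n)$ vanishes outside a set of density zero. This exhibits $(R_n)$ as the sum of an order convergent sequence and a sequence that vanishes on a set of density one, as required.

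There is no genuine obstacle here: the characterization theorem already encapsulates the decomposition principle, and the only task is the cosmetic one of renaming $R_n - T_n$ as $U_n$. If one preferred an alternative route that bypasses Theorem \ref{thm:characterization}, one could instead combine Theorem \ref{thm:st_conv_implies_st_ob} with the preceding Theorem \ref{thm:decomposition} to split $R_n$ into an order bounded part plus a density-zero remainder, but then one would still need an extra argument to upgrade the order bounded part to an order convergent one, which is precisely what Theorem \ref{thm:characterization} supplies directly.
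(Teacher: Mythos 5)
Your proposal is correct: Theorem \ref{thm:characterization} hands you an order convergent $(T_n)$ agreeing with $(R_n)$ on a density-one set $D$, and setting $U_n := R_n - T_n$ immediately gives the required decomposition with $U_n = \theta$ on $D$. The paper states the corollary without proof, but your argument is exactly the intended derivation (mirroring the splitting used in the preceding decomposition theorem, with the characterization theorem supplying the order convergent part), so there is nothing to add.
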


\section{Conclusions}
In this paper, we have developed the theory of statistical order convergence for sequences of order bounded operators on Riesz spaces. The results demonstrate that this notion extends classical order convergence in a non-trivial way while preserving many structural properties of operator lattices. Possible directions for future research include statistical unbounded order convergence for operators and applications to Banach lattice theory.

%%%%%%%%%%%%%%%%%%%%%%%%%%%%%%%%%%%%%%%%%%%%%%%%%%%%%%%%%%%%%%%%%%%%%%%%%%%%%%

\end{document}